\newcommand{\IR}{\mathbb R}
\newcommand{\IN}{\mathbb N}
\newcommand{\br}{\mathbf r}
\newcommand{\e}{\varepsilon}
\newtheorem{theorem}{Theorem}[section]
\newtheorem{corollary}[theorem]{Corollary}
\newtheorem{lemma}[theorem]{Lemma}
\newtheorem{claim}[theorem]{Claim}
\newtheorem{problem}[theorem]{Problem}
\theoremstyle{definition}
\newtheorem{definition}[theorem]{Definition}
\title[Every $2$-dimensional Banach space has the Mazur--Ulam property]{Every $2$-dimensional Banach space has\\ the Mazur--Ulam property}
\author{Taras Banakh}
\address{Ivan Franko National University of Lviv (Ukraine) and Jan Kochanowski University in Kielce (Poland)}
\email{t.o.banakh@gmail.com}
\subjclass{46B04, 46B20, 52A21, 52A10, 53A04, 54E35, 54E40}
\keywords{Tingley's Problem, Mazur--Ulam property, Banach space, isometry}
\begin{document}
\begin{abstract} We prove that every isometry between the unit spheres of 2-dimensional Banach spaces extends to a linear isometry of the Banach spaces. This resolves the famous Tingley's problem in the class of 2-dimensional Banach spaces.
\end{abstract}
\maketitle

\section{Introduction}

By the classical result of Mazur and Ulam \cite{MU}, every bijective isometry between Banach spaces is affine. This result essentially asserts that the metric structure of a Banach space determines its linear structure. In  \cite{Man} Mankiewicz proved that every bijective isometry $f:B_X\to B_Y$ between the unit balls of two Banach spaces $X,Y$ extends to a linear isometry of the Banach spaces. In \cite{Tingley} Tingley asked if the unit balls in this result of Mankiewicz can be replaced by the unit spheres. More precisely, he posed the following problem (that remains unsolved more than thirty years).

\begin{problem}[Tingley, 1987]\label{prob:Tingley} Let $f:S_X\to S_Y$ be a bijective isometry of the unit spheres of two Banach spaces $X,Y$. Can $f$ be extended to a linear isometry between the Banach spaces $X,Y$?
\end{problem}

Here for a Banach space $(X,\|\cdot\|)$ by 
$$B_X=\{x\in X:\|x\|\le 1\}\quad\mbox{and}\quad S_X=\{x\in X:\|x\|=1\}$$we denote the unit ball and unit sphere of $X$, respectively.

Tingley's Problem~\ref{prob:Tingley} can be equivalently reformulated in terms of the Mazur--Ulam property,  introduced by Cheng and Dong \cite{CD} and widely used in the the literature, see e.g. \cite{BG}, \cite{CAP}, \cite{CAP2}, \cite{JVMCPR}, \cite{Li}, \cite{MO}, \cite{WX}.

\begin{definition} A Banach space $X$ is defined to have the {\em Mazur--Ulam property} if every  isometry $f:S_X\to S_Y$ of $S_X$ onto the unit sphere $S_Y$ of an arbitrary Banach space $Y$ extends to a linear isometry of the Banach spaces $X,Y$.
\end{definition} 

In fact, Tingley's Problem~\ref{prob:Tingley} asks whether every Banach space has the Mazur--Ulam property. Many classical Banach spaces (including $C(K)$, $c_0(\Gamma)$, $\ell_p(\Gamma)$, $L_p(\mu)$) have the Mazur--Ulam property, see \cite{Ding}, \cite{DL}. By the result of Kadets and Mart\'\i n \cite{KM}, every polyhedral finite-dimensional Banach space has the Mazur--Ulam property. 

The main result of this paper is the following theorem that answers Tingley's problem in the class of $2$-dimensional Banach spaces.

\begin{theorem}\label{t:main} Every $2$-dimensional Banach space has the Mazur--Ulam property.
\end{theorem}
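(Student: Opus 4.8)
Let me think about what this requires. We have a 2-dimensional Banach space $X$, an arbitrary Banach space $Y$, and a surjective isometry $f:S_X\to S_Y$. First I need to establish that $Y$ is also 2-dimensional—this should follow from $S_Y$ being isometric to the 1-sphere $S_X$, which is homeomorphic to a circle, so $Y$ must be 2-dimensional (invariance of domain / dimension theory). So WLOG $Y$ is a 2-dimensional Banach space too.

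Now let me think about the structure. The unit sphere of a 2-dimensional Banach space is a convex curve (boundary of a convex body in the plane), topologically a circle. An isometry between such curves (in the intrinsic metric inherited from the norm) must preserve a lot of metric structure. Key tool: the **antipodal map**. For $x\in S_X$, the point $-x$ is characterized metrically—I would want to show $f(-x)=-f(x)$ for all $x$. Over $\mathbb R$ this is the crux. One standard approach uses the fact that $\|x-(-x)\|=2$ is the diameter of $S_X$, and diametral pairs are preserved; but one must show diametral pairs are exactly antipodal pairs, which holds in any Banach space. So $f$ maps diameters to diameters, giving $f(-x)=-f(x)$. Combined with surjectivity, this means $f$ is determined by its action on "half" the sphere.

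Then I would bring in the curve-length structure. The norm induces an arc-length (the intrinsic metric) on $S_X$; call the total length $L_X$. Since $f$ is a surjective isometry for the chord metric $\|\cdot\|$, and the sphere is a convex curve, I'd argue $f$ is also an isometry for the intrinsic metric (a surjective isometry between these specific compact spaces preserves arc length because arc length is recoverable from the chord metric via $\sup$ of sums of chord-distances over partitions). So $L_X=L_Y$, and we can parametrize both spheres by arc length; $f$ becomes a rotation or reflection of the parameter circle $\mathbb R/L\mathbb Z$. With the antipodal condition $f(-x)=-f(x)$, the parametrization is pinned down up to the choice of starting point and orientation. The hard part—and I expect this to be the main obstacle—is showing that the resulting bijection $S_X\to S_Y$, which we now know is "rigid" as a map of parametrized convex curves, actually extends to a \emph{linear} map, i.e. that the two convex bodies are linearly isometric via a map inducing $f$. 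Preserving the chord metric $\|x-y\|_X=\|f(x)-f(y)\|_Y$ for all pairs is a strong system of constraints on the two norms; one must show these constraints force the existence of a linear $T:X\to Y$ with $Tx=f(x)$ on $S_X$. I would attempt this by fixing a basis $x_1=f^{-1}(e_1)$, defining $T$ by $Tx_1=e_1$ and $Tx_2 = f(x_2)-$(correction) for a second point, and then verifying $\|T z\|_Y=\|z\|_X$ on all of $S_X$ using the full strength of the isometry equation together with convexity; the antipodal symmetry handles the rays through the origin.

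**Main obstacle.** The genuine difficulty is the passage from "$f$ preserves all pairwise norm-distances between boundary points" to "$f$ is the restriction of a linear isometry." Unlike the polyhedral case handled by Kadets–Mart\'\i n, here the unit sphere can be an arbitrary (possibly non-smooth, non-strictly-convex) convex curve, so one cannot rely on extreme-point combinatorics; instead one needs a delicate argument with support functionals, width functions, and the interplay between the norm of $X$ and the norm of $Y$ along the parametrized curves. I would expect the proof to split into cases according to the local structure of $S_X$ (presence of line segments versus smooth points), using at each point the metric identity to transfer that structure to $S_Y$, and finally invoking the Mazur–Ulam theorem itself on the unit \emph{balls} once enough of the linear structure has been reconstructed.
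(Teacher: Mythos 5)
Your proposal is an outline rather than a proof, and it stops exactly where the difficulty begins. The paper does not attack Theorem~\ref{t:main} head-on: it splits into four cases according to whether $X$ is strictly convex, smooth, and absolutely smooth, quotes three prior results for the first three cases (Theorem~\ref{t:CS} of Cabello S\'anchez for non-strictly-convex spaces, Theorem~\ref{t:BCS} of Banakh--Cabello S\'anchez for non-smooth spaces, and the Banakh result for absolutely smooth spaces), and devotes all of its new work to the remaining case of Theorem~\ref{t:main2} (strictly convex, smooth, not absolutely smooth). That case is handled by a mechanism entirely absent from your sketch: one shows that at every point $s$ where the phase shift of a natural parameterization fails to be Lipschitz, the direction $\br(s)$ is \emph{special} (Lemma~\ref{l:spec2}), that non-absolute-smoothness forces uncountably many such points (Lemma~\ref{l:Las}), and then one invokes Theorem~\ref{t:key} on two linearly independent special directions. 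Your own sketch, by contrast, reduces the problem to ``$f$ is conjugate to a rigid motion of the arc-length parameter circle'' and then asserts that one ``must show these constraints force the existence of a linear $T$'' by ``verifying $\|Tz\|_Y=\|z\|_X$ \dots\ using the full strength of the isometry equation.'' That is a restatement of Tingley's problem, not a solution of it: two convex curves of the same length admit compatible arc-length parametrizations without being linearly equivalent, so nothing in your reduction produces linearity.

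There is also a concrete error in the one step you do argue in detail. You claim that diametral pairs of $S_X$ are exactly the antipodal pairs ``in any Banach space.'' This is false: in $\ell_\infty^2$ the points $(1,1)$ and $(-1,1)$ lie on the unit sphere at distance $2$ but are not antipodal. The identity $f(-x)=-f(x)$ is Tingley's lemma (Lemma~\ref{l:Tingley}), which the paper cites and which requires a genuinely different argument; it cannot be obtained from preservation of diameters in the way you propose. So both the antipodality step and the final linearity step of your plan are unsupported, and the second of these is the entire content of the theorem.
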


Theorem~\ref{t:main} is a corollary of four partial answers to Tingley's problem. The first of them  was proved by Cabello S\'anchez in \cite{San}.

\begin{theorem}[Cabello S\'anchez]\label{t:CS} A $2$-dimensional Banach space has the Mazur--Ulam property if it is not strictly convex.
\end{theorem}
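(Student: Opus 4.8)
The plan is to start from a surjective isometry $f\colon S_X\to S_Y$ (where $X$ is $2$-dimensional and not strictly convex, and $Y$ is an arbitrary Banach space) and to produce a linear isometry $X\to Y$ extending it, the engine being the flat part of $S_X$ guaranteed by the failure of strict convexity. \emph{Easy reductions first.} Since $f$ is a homeomorphism of the topological circle $S_X$ onto $S_Y$, the space $Y$ is $2$-dimensional as well. Recall that in finite dimensions every surjective isometry between unit spheres preserves antipodal points, so we may assume $f(-x)=-f(x)$ for all $x\in S_X$. If $B_X$ is a polygon, then $X$ is polyhedral and we are done by the Kadets--Mart\'\i n theorem; if $B_Y$ is a polygon, the same theorem applied to the surjective isometry $f^{-1}\colon S_Y\to S_X$ yields a linear isometry $Y\to X$ whose inverse extends $f$. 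Hence I may assume that neither $B_X$ nor $B_Y$ is a polygon, while $S_X$ still contains a nondegenerate segment.

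\emph{Reduction to chords.} The only candidate for the extension is the positively homogeneous odd map $F\colon X\to Y$ with $F(0)=0$ and $F(x)=\|x\|\,f(x/\|x\|)$ for $x\neq 0$. By Mankiewicz's theorem it suffices to show that $F$ restricts to a bijective isometry of $B_X$ onto $B_Y$, and a short computation with homogeneity and $f(-x)=-f(x)$ reduces this to the single assertion that $f$ preserves the norms of all chords of $S_X$:
\[
\bigl\|\lambda u+(1-\lambda)v\bigr\|=\bigl\|\lambda f(u)+(1-\lambda)f(v)\bigr\|\qquad\text{for all }u,v\in S_X,\ \lambda\in[0,1].
\]
We already know $\|u-v\|=\|f(u)-f(v)\|$ (preservation of the endpoints of chords) and, applying this to the antipodes of $u$ and $v$, that $\|u+v\|=\|f(u)+f(v)\|$ (preservation of their midpoints); the task is to fill in the rest of the chord.

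\emph{The flat part.} A nondegenerate segment lying inside a convex curve such as $S_X$ is precisely a connected subset of $S_X$ isometric to a real interval, and being the endpoint of a \emph{maximal} such segment is an intrinsic property of the metric space $S_X$. Consequently $f$ carries every maximal segment $[a,b]\subseteq S_X$ onto a maximal segment of $S_Y$, affinely and with $\|f(a)-f(b)\|=\|a-b\|$; in particular $S_Y$ is not strictly convex either, $f$ induces a length-preserving bijection between the maximal segments of $S_X$ and of $S_Y$ respecting the cyclic order (up to orientation), and the chord identity holds whenever $u,v$ lie on a common maximal segment of $S_X$ (both sides being $1$), hence also on a common maximal segment of $S_X=-S_X$.

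\emph{Propagation, and the main obstacle.} Fix a maximal segment $[a,b]\subseteq S_X$. After composing $f$ with a suitable linear isomorphism of $Y$ (used to redefine the norm on the target) we may assume that $X$ and $Y$ both carry $[a,b]$ as an edge of their spheres and that $f$ is the identity on $[a,b]$ and on $[-a,-b]$. Then for every $p\in S_X$ we have $\|f(p)-e\|_Y=\|p-e\|_X$ for every point $e$ of these two segments, together with $f(-p)=-f(p)$. The plan is to exploit these constraints, together with the continuity of $f$ and the connectedness of the circle $S^1$, to pin down $f(p)$ and the norm of $Y$ simultaneously, thereby upgrading the chord identity from the flat pairs to all pairs. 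This last step is where I expect the real difficulty: one must genuinely reconstruct the linear structure from the metric data, and in particular control $f$ on the strictly convex arcs of $S_X$, where the flat-spot arguments give no direct information and the rigidity must be transported around the circle by a connectedness/continuity argument — this propagation being the substitute, for $2$-dimensional spaces, for the classical Mazur--Ulam reflection technique.
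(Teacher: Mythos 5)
First, a point of order: the paper does not prove Theorem~\ref{t:CS} at all --- it is imported as a black box from Cabello S\'anchez's paper \cite{San} --- so your attempt can only be measured against the argument in the literature. Your preparatory reductions are correct and standard: $Y$ is forced to be $2$-dimensional, Tingley's lemma gives oddness, Kadets--Mart\'\i n disposes of the polyhedral case, and Mankiewicz's theorem reduces everything to the chord identity. The problems begin afterwards.

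There are two genuine gaps. \emph{(1)} Your metric characterization of segments --- ``a nondegenerate segment of $S_X$ is precisely a connected subset of $S_X$ isometric to a real interval'' --- is false. Take the hexagonal norm $\|(x,y)\|=\max\{|y|,\,|x|+|y|/2\}$ and the arc of $S_X$ running from $(1,0)$ through the vertex $(\tfrac12,1)$ to $(-\tfrac12,1)$, parameterized with unit speed; for points $(1-\tfrac{s}{2},s)$ and $(\tfrac12-t,1)$ on the two edges one computes $\bigl\|(1-\tfrac{s}{2},s)-(\tfrac12-t,1)\bigr\|=(1-s)+t$, which is exactly the arc-length distance, so this union of two adjacent edges is isometric to $[0,2]$ without being a segment (and it remains maximal among interval-isometric connected sets, so maximality does not rescue the claim; nor is the phenomenon confined to polyhedral spheres). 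The fact you actually need --- that surjective isometries of spheres carry maximal convex subsets onto maximal convex subsets --- is true but is a nontrivial theorem (Cheng--Dong, Tanaka) requiring a different proof. \emph{(2)} Decisively, the ``propagation'' paragraph is not an argument but an announcement of one: you record the constraints $\|f(p)-e\|_Y=\|p-e\|_X$ and state that the plan is to pin down $f(p)$ on the strictly convex arcs by continuity and connectedness, then observe that this is where the real difficulty lies. That difficulty is the entire content of the theorem. The proof that exists in the literature does not verify the chord identity for all pairs directly; it establishes that the direction $s=(b-a)/\|b-a\|$ of a maximal segment $[a,b]\subseteq S_X$ is a \emph{special direction} in the sense of Section~2.2 --- that is, $f(v)-f(u)=\|v-u\|\cdot f(s)$ for \emph{every} chord $[u,v]$ of $S_X$ parallel to $s$, a much stronger, vector-valued statement --- by analyzing the reflection $\theta$ of $S_X$ along that direction (compare the proof of Lemma~\ref{l:spec2}), and only then reconstructs linearity. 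Until that step is supplied, the proposal is a plan, not a proof.
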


A Banach space is called 
\begin{itemize}
\item {\em strictly convex} if any convex subset of its unit sphere contains at most one point;
\item {\em smooth} if its unit ball has a unique supporting hyperplane at each point of the unit sphere.
\end{itemize}

The second ingredient of the proof of Theorem~\ref{t:main} was proved by Banakh and Cabello Sanchez in \cite{BCS}.

\begin{theorem}[Banakh, Cabello S\'anchez]\label{t:BCS} A $2$-dimensional Banach space has the Mazur--Ulam property if it is not smooth.
\end{theorem}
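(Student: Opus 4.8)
The plan is to extract from the isometry $f:S_X\to S_Y$ two structural facts that cost nothing, and then to use the corner forced by non-smoothness as the rigid anchor that upgrades these facts to linearity. First I would record the two ``free'' facts. Since $\|u-v\|\le 2$ for $u,v\in S_X$ with equality exactly when $v=-u$, the antipode is metrically canonical, so every surjective isometry satisfies $f(-u)=-f(u)$. Consequently $f$ preserves not only $\|u-v\|=d(u,v)$ but also $\|u+v\|=d(u,-v)=d\bigl(f(u),-f(v)\bigr)=\|f(u)+f(v)\|$ for all $u,v\in S_X$. Extending $f$ radially by $F(0)=0$ and $F(ru)=rf(u)$ (for $r\ge 0$, $u\in S_X$), one checks that $F$ is the desired linear isometry \emph{iff} $F$ is additive, which, using $\|u+v\|=\|f(u)+f(v)\|$, is equivalent to the single requirement that $f$ preserve normalised midpoints: $f\bigl(\tfrac{u+v}{\|u+v\|}\bigr)=\tfrac{f(u)+f(v)}{\|f(u)+f(v)\|}$ whenever $u\ne -v$. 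I would stress that this is \emph{not} a purely metric condition, since the normalised midpoint is in general not the equidistant point of the arc joining $u$ and $v$; distance preservation alone therefore cannot yield it, and this is precisely the gap that non-smoothness must close.

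Second, I would detect corners metrically. For $x_0\in S_X$ and small $\e>0$ let $p^+_\e,p^-_\e$ be the two points of $S_X$ nearest to $x_0$ in the cyclic order with $\|p^\pm_\e-x_0\|=\e$; writing $p^\pm_\e=x_0+\e v_\pm+o(\e)$ with unit one-sided tangent vectors $v_\pm$, the limit
\[
\lambda(x_0)=\lim_{\e\to0^+}\frac{\|p^+_\e-p^-_\e\|}{\e}=\|v_+-v_-\|
\]
equals $2$ exactly when the two one-sided tangent lines coincide, i.e. when $x_0$ is smooth, and is strictly smaller than $2$ exactly when $x_0$ is a corner. Since $f$ is a homeomorphism of circles preserving $d$, it carries the nearest pair at $x_0$ to the nearest pair at $f(x_0)$ and hence preserves $\lambda$. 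Therefore $f$ maps the (nonempty, by hypothesis) set of corners of $S_X$ bijectively onto the corners of $S_Y$, forcing $Y$ to be non-smooth as well, and it matches the unit one-sided tangent vectors at corresponding corners.

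Finally I would linearise using a corner as anchor. Fix a corner $x_0\in S_X$ and set $y_0=f(x_0)$, a corner of $S_Y$; after composing $f$ with $-\mathrm{id}$ and, if necessary, with a reflection of $Y$, I may assume the cyclic orientation and the two edges issuing from $x_0$ are matched with those issuing from $y_0$. Let $v_+,w_+$ be the corresponding unit one-sided tangent vectors, so that $(f(p^+_\e)-y_0)/\e\to w_+$. Since $x_0$ and $v_+$ are linearly independent, I would define $T:X\to Y$ as the unique linear isomorphism with $Tx_0=y_0$ and $Tv_+=w_+$. The remaining task is to prove $T|_{S_X}=f$; equivalently, that $g:=T^{-1}\circ f$, a self-isometry of $S_X$ fixing $x_0$, $-x_0$, and the tangent data there, is the identity.

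The main obstacle is precisely this last propagation step: promoting local rigidity at one corner to global linearity, i.e. excluding an exotic nonlinear self-isometry $g$ of $S_X$ that fixes a corner together with its tangent data. Here non-smoothness must be exploited beyond mere corner-matching. The route I would pursue uses the dual picture: a corner of $S_X$ is a nondegenerate flat edge of $S_{X^*}$, furnishing a whole segment of supporting functionals $\phi$ whose level sets $\{x\in S_X:\phi(x)=c\}$ should be metrically recognisable; comparing these level sets under $g$, together with the preserved quantity $\|u+v\|$ that transports the anchoring from the arc near $x_0$ across the whole sphere, should force $g$ to fix every point. I expect the bookkeeping for spaces with many (possibly infinitely many) corners, and the careful handling of orientation, to be the most delicate parts. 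An alternative, reducing directly to Cabello S\'anchez's Theorem~\ref{t:CS} for the non--strictly-convex space $X^*$, looks attractive but founders on the fact that $f$ does not obviously induce an isometry of the dual spheres, so I would keep the argument primal.
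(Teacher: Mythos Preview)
The paper does not itself prove Theorem~\ref{t:BCS}; it imports it from \cite{BCS}. That same reference supplies Theorem~\ref{t:key}, and the pattern of Section~\ref{s:main} makes the intended mechanism clear: one shows that non-smoothness forces the existence of two linearly independent \emph{special directions} on $S_X$, after which Theorem~\ref{t:key} finishes the job. Your route is entirely different---you try to guess the linear extension $T$ from local data at a single corner and then verify it globally---and it has a genuine gap beyond the one you acknowledge.

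The reduction to ``$g:=T^{-1}\circ f$, a self-isometry of $S_X$'' is circular. You defined $T$ only by $Tx_0=y_0$ and $Tv_+=w_+$; nothing yet says $T$ is an isometry, so $T^{-1}(S_Y)$ need not equal $S_X$ and $\|g(u)-g(v)\|_X=\|T^{-1}(f(u)-f(v))\|_X$ need not equal $\|u-v\|_X$. Thus $g$ is merely a homeomorphism of $S_X$ onto some symmetric convex curve $T^{-1}(S_Y)\subset X$ that happens to touch $S_X$ at $\pm x_0$ with matching one-sided tangents there; it carries no metric structure you can exploit. Showing $g=\mathrm{id}$ is therefore not a rigidity question about self-isometries of $S_X$---it is the original problem restated, and your proposed propagation via dual level sets has no purchase because $g$ preserves nothing a priori. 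The special-direction approach avoids this trap precisely by never guessing $T$ in advance: it isolates a metric property that every isometry must respect, and reconstructs $T$ a posteriori from two such directions. A smaller point: your justification of $f(-u)=-f(u)$ via ``$\|u-v\|=2\Leftrightarrow v=-u$'' fails when $X$ is not strictly convex (e.g.\ $u=(1,1)$, $v=(-1,1)$ in $\ell_\infty^2$); since a non-smooth $X$ may well fail strict convexity, you need to invoke Tingley's Lemma~\ref{l:Tingley} instead.
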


The third crucial ingredient of the proof of Theorem~\ref{t:main} concerns absolutely smooth Banach spaces. A $2$-dimensional Banach space $X$ is called {\em absolutely smooth} if there exists a differentiable map $\br:\IR\to S_X$ such that $\|\br'(s)\|=1$ for all $s\in\IR$ and the derivative $\br':\IR\to S_X$ is locally absolutely continuous. For absolutely smooth $2$-dimensional Banach spaces, Tingley's problem was answered in \cite{Ban} as follows.

\begin{theorem}[Banakh] Any isometry between the unit spheres of two absolutely smooth $2$-dimensional Banach spaces extends to a linear isometry of the Banach spaces.
\end{theorem}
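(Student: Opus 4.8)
The plan is to promote the given sphere isometry to one compatible with arc-length parametrizations, rewrite the desired conclusion as a uniqueness statement for a functional differential equation satisfied by the unit sphere, and then settle that uniqueness using the regularity built into absolute smoothness.

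\smallskip\noindent\emph{Step 1: matching the parametrizations.} Let $f\colon S_X\to S_Y$ be an isometry between the unit spheres of absolutely smooth $2$-dimensional Banach spaces $X,Y$, and fix arc-length parametrizations $\br\colon\IR\to S_X$, $\bp\colon\IR\to S_Y$ as in the definition; note that $\br,\bp$ are of class $C^1$, because their derivatives are locally absolutely continuous, hence continuous, with $\|\br'\|\equiv\|\bp'\|\equiv1$. The first observation is that $f$, being an isometry for the norm-metrics restricted to the spheres, is automatically an isometry for the \emph{induced length metrics}, since the length metric of a metric space depends only on the metric. As $S_X,S_Y$ are topological circles, their length metrics are arc-length metrics, so their perimeters coincide, say $L:=L_X=L_Y$, and $f\circ\br=\bp\circ\varphi$ for an isometry $\varphi$ of $\IR/L\IZ$, i.e.\ $\varphi(s)=\e s+c$ with $\e\in\{-1,1\}$. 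Replacing $\bp$ by $s\mapsto\bp(\e s+c)$ — again $C^1$, unit-speed, with locally absolutely continuous derivative — we may assume $f(\br(s))=\bp(s)$ for all $s\in\IR$, so that $\|\br(s)-\br(t)\|=\|\bp(s)-\bp(t)\|$ for all $s,t\in\IR$; since $\br(s+L/2)=-\br(s)$ and $\bp(s+L/2)=-\bp(s)$, the map $f$ is moreover odd.

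\smallskip\noindent\emph{Step 2: reformulation.} Because the tangent line to the convex curve $S_X$ at $\br(0)$ misses the interior point $0$, the vectors $\br(0),\br'(0)$ form a basis of $X$, and likewise $\bp(0),\bp'(0)$ form a basis of $Y$. Let $T\colon X\to Y$ be the linear isomorphism with $T\br(0)=\bp(0)$ and $T\br'(0)=\bp'(0)$. It suffices to prove that $T\br(s)=\bp(s)$ for all $s$, for then $T(S_X)=\bp(\IR)=S_Y$ and $T$ is a linear isometry extending $f$. Now $\|\br'(s)\|\equiv1$ means that $\br'$ is itself a unit-speed parametrization of $S_X$, so $\br'(s)=\br(\tau_X(s))$ for a unique increasing $\tau_X\colon\IR\to\IR$ with $\tau_X(s+L)=\tau_X(s)+L$; differentiating gives $\br''(s)=\tau_X'(s)\,\br'(\tau_X(s))$ for a.e.\ $s$, with $\tau_X'=\|\br''\|\in L^1_{\mathrm{loc}}$, so that the curve $\br$ solves the functional differential equation $\br'=\br\circ\tau_X$ of Carath\'eodory type. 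Putting $\mathbf q:=T\circ\br$, linearity of $T$ gives $\mathbf q(0)=\bp(0)$, $\mathbf q'(0)=\bp'(0)$ and $\mathbf q'(s)=T\br'(s)=T\br(\tau_X(s))=\mathbf q(\tau_X(s))$, while $\bp$ solves the analogous equation $\bp'=\bp\circ\tau_Y$. Hence it remains to show: \emph{(a)} $\tau_X=\tau_Y$, and \emph{(b)} that the equation $\mathbf q'=\mathbf q\circ\tau$, together with the periodicity and closure constraints obeyed by unit-sphere parametrizations, admits at most one solution with prescribed value and derivative at $s=0$.

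\smallskip\noindent\emph{Step 3: the invariant and the uniqueness — the main obstacle.} For \emph{(a)} one must recover $\tau_X$, equivalently the ``$X$-curvature'' $\tau_X'$, from the chord data $\|\br(s)-\br(t)\|$ alone. The point $\br'(s)\in S_X$ is the point of $S_X$ Birkhoff-orthogonal to $\br(s)$ in the orientation of $\br$ (the tangent line at $\br(s)$ being the unique supporting line there), so that $\tau_X(s)-s$ is the arc length from $\br(s)$ to that point; the delicate issue is that Birkhoff orthogonality refers a priori to the ambient linear structure rather than to the intrinsic metric of $S_X$. I would handle this by analysing, for a.e.\ $s$, the second-order behaviour of $t\mapsto\|\br(s)-\br(t)\|$ near $t=s$ together with the joint behaviour at nearby triples $\br(s),\br(s+h),\br(s+h')$, thereby extracting the supporting functional at each point — hence the tangent structure — from the metric, and then integrating to recover $\tau_X$; since the same computation applied to $\bp$ yields $\tau_Y$, this gives $\tau_X=\tau_Y$. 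For \emph{(b)}, with $\tau:=\tau_X=\tau_Y$ fixed the equation $\mathbf q'=\mathbf q\circ\tau$ is of functional (advanced) type since $\tau(s)\ne s$, so a germ at a single point need not determine a general solution; but the extra demands that $\mathbf q$ be $L$-periodic, that its image be a centrally symmetric convex curve about $0$, and that $\mathbf q'$ reparametrize that very curve, turn the problem rigid, and uniqueness holds at the Carath\'eodory regularity that absolute smoothness guarantees. Combining \emph{(a)}, \emph{(b)} and the matched germ $(\mathbf q(0),\mathbf q'(0))=(\bp(0),\bp'(0))$ yields $\mathbf q=\bp$, i.e.\ $T\br=\bp$, which finishes the proof. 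The heart of the argument — and the reason for restricting to absolutely smooth spaces — is exactly in making the invariant extraction in \emph{(a)} and the uniqueness in \emph{(b)} rigorous under this minimal regularity.
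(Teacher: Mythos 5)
This theorem is not proved in the present paper at all: it is quoted from the earlier article \cite{Ban}, so there is no internal proof to compare you against. That said, your outline does track the machinery that this paper imports from \cite{Ban}: your Step 1 is essentially Theorem~\ref{t:u} (an isometry of spheres intertwines the natural parameterizations up to an isometry of $\IR$), and your Step 2 correctly reduces the problem to the functional differential equation $\br'=\br\circ\varphi$ governed by the phase shift, with the supercurvature $\tau_X'$ playing the role of curvature.

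The difficulty is that your proposal stops exactly where the mathematical content begins: both claims you label \emph{(a)} and \emph{(b)} are asserted rather than proved, and you acknowledge as much. For \emph{(a)}, ``extracting the supporting functional from the second-order behaviour of $t\mapsto\|\br(s)-\br(t)\|$'' is a genuine analytic problem, not a remark: near $t=s$ that function equals $|t-s|(1+o(1))$, and its higher-order terms mix the decomposition of $\br''$ in the frame $\br,\br'$ with the second-order geometry of the target sphere at the chord direction, so it is not automatic that the phase shift (equivalently the radial supercurvature) is metrically determined; expansions of exactly this kind are what Lemmas~\ref{l:first} and \ref{l:noLip} of Section~\ref{s:dif} carry out for a different purpose, and they are delicate. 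For \emph{(b)}, you correctly note that for an advanced equation $\mathbf q'=\mathbf q\circ\tau$ a germ at one point does not force uniqueness, and you then simply declare that periodicity, central symmetry and convexity ``turn the problem rigid.'' That rigidity statement is precisely the main technical theorem of \cite{Ban}, and it is where the hypothesis of local absolute continuity of $\br'$ is actually consumed; no argument for it is offered here. As written, then, you have correctly identified the strategy and the two places where work must be done, but the two decisive steps are missing, so this is a proof plan rather than a proof.
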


Therefore, to derive Theorem~\ref{t:main} it remains to prove 

\begin{theorem}\label{t:main2} A $2$-dimensional Banach space has the Mazur--Ulam property if it is strictly convex and smooth but not absolutely smooth.
\end{theorem}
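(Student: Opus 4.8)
The plan is to fix a strictly convex and smooth 2-dimensional Banach space $X$ and an isometry $f:S_X\to S_Y$ onto the unit sphere of an arbitrary Banach space $Y$, and to reduce the problem to the absolutely smooth case treated in the preceding theorem. First I would record standard preliminary facts: by Tingley's classical observation $f$ is odd (i.e. $f(-x)=-f(x)$), $Y$ is necessarily 2-dimensional, and since $X$ is strictly convex and smooth, so is $Y$ (these properties are readily seen to be preserved by surjective sphere isometries — smoothness and strict convexity are dual to each other and both are encoded in the metric geometry of the sphere, e.g. via the behaviour of metric antipodes and the uniqueness of metric segments). The heart of the matter is to promote the bare topological/metric regularity of $S_X$ to the differentiability demanded by the definition of absolute smoothness, using the isometry $f$ itself to transport structure.

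The key idea I would pursue is an \emph{arc-length reparametrization} of both spheres. Since $X$ is 2-dimensional, $S_X$ is a rectifiable Jordan curve, so it admits a parametrization $\br_X:\IR\to S_X$ by arc length (with respect to the norm of $X$), periodic with period equal to the circumference $\ell_X$ of $S_X$; this $\br_X$ is $1$-Lipschitz and differentiable almost everywhere with $\|\br_X'(s)\|=1$ wherever the derivative exists. Because $f$ is an isometry of the spheres with their \emph{intrinsic} metrics restricted from the ambient norms — more precisely, one shows $f$ preserves the intrinsic (arc-length) metric on the spheres, since a surjective isometry between the chordal metrics of two convex curves is automatically length-preserving — the composition $f\circ\br_X$ is, after a linear change of parameter, an arc-length parametrization $\br_Y$ of $S_Y$. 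Smoothness of $X$ is then exactly the statement that $\br_X$ is everywhere differentiable (a supporting line at $\br_X(s)$ being unique forces one-sided tangents to agree), and likewise for $Y$; strict convexity guarantees $\br_X'$ is injective modulo period, i.e. no two points of $S_X$ share a tangent direction other than antipodes. Thus both $\br_X$ and $\br_Y$ are $C^1$-like arc-length parametrizations, and $f$ intertwines them up to an affine parameter substitution.

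The remaining, and I expect hardest, step is to upgrade "$\br_X$ differentiable everywhere" to "$\br_X'$ locally absolutely continuous", i.e. to show that a strictly convex smooth 2-dimensional Banach space that is \emph{not} absolutely smooth cannot actually exist as the domain of an isometry onto a \emph{genuinely different} sphere — or, more precisely, to show that the failure of absolute continuity of $\br_X'$ is transported faithfully to $\br_Y'$ by $f$, so that $X$ and $Y$ have "the same" (lack of) regularity, and then to construct the linear isometry $T:X\to Y$ directly from the matching of tangent directions. Concretely, I would define $T$ on $S_X$ by declaring $T\br_X(s)=\br_Y(s)$ (after normalizing periods and orientation so that $f$ and this parametrization match), extend by positive homogeneity, and verify linearity: additivity on the sphere reduces to showing that $T$ sends the tangent line of $S_X$ at $x$ to the tangent line of $S_Y$ at $Tx$, which follows because $f$ preserves the metric-betweenness structure and hence the "infinitesimal" geometry. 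The obstacle is that without absolute continuity of $\br_X'$ one has no second-order information, so the verification that $T$ is additive must be carried out purely from first-order/metric data; I anticipate this is where one genuinely uses that $f$ is a \emph{global} isometry of the sphere (not merely locally), invoking the Mazur--Ulam-type rigidity of midpoints together with the strict convexity of $Y$ to pin down $T(x+y)$ as the unique point with the prescribed distances. Once additivity is established, $T$ is a linear isometry extending $f$, completing the proof; the cases already handled (non-strictly-convex, non-smooth, absolutely smooth) together with this one exhaust all 2-dimensional Banach spaces, yielding Theorem \ref{t:main}.
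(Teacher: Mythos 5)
Your proposal correctly sets up the framework (oddness of $f$ via Tingley's lemma, two-dimensionality of $Y$, transport of strict convexity and smoothness to $Y$, and the existence of arc-length/natural parametrizations $\br_X$, $\br_Y$ intertwined by $f$ up to an affine change of parameter); all of this matches the preliminaries the paper imports from earlier work. But there is a genuine gap exactly where you flag it yourself: the step ``verify additivity of the positively homogeneous extension $T$ from first-order/metric data'' is not an argument --- it is a restatement of Tingley's problem in dimension two. Asserting that $T$ sends tangent lines to tangent lines and that $f$ preserves ``metric-betweenness'' does not yield additivity; for a general strictly convex smooth norm there is no direct way to pin down $T(x+y)$ from midpoint rigidity alone. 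Moreover the hypothesis that $X$ is \emph{not} absolutely smooth plays no substantive role in your sketch, whereas it must be exploited somewhere, since the absolutely smooth case requires an entirely separate (and already published) argument.

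The paper's actual mechanism is different and turns the irregularity of the sphere into an asset. One introduces the phase shift $\varphi$ of a natural parametrization (the monotone function with $\br'(s)=\br(\varphi(s))$) and shows: (i) if $\varphi$ were Lipschitz at all but countably many points, then $X$ would be absolutely smooth (Lemmas~\ref{l:mac} and \ref{l:Las}), so by hypothesis the set $C$ of non-Lipschitz points of $\varphi$ is uncountable; (ii) for every $s\in C$ the direction $\br(s)$ is \emph{special}, meaning that every sphere isometry $f$ satisfies $f(b)-f(a)=\|b-a\|\cdot f(\br(s))$ whenever $b-a$ is a positive multiple of $\br(s)$ --- this is Lemma~\ref{l:spec2}, whose proof is the technical core of the paper and rests on a second-order differentiability analysis of chord lengths (Lemmas~\ref{l:first} and \ref{l:noLip}) forcing the normalized chord map $\Phi$ to be constant; and (iii) two linearly independent special directions imply the Mazur--Ulam property (Theorem~\ref{t:key}, imported from the Banakh--Cabello S\'anchez paper). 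Without the notion of special direction and the Lipschitz/non-Lipschitz dichotomy for the phase shift, your outline cannot be completed as written.
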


This theorem will be proved in Section~\ref{s:main} after some preparatory work, made in Sections~\ref{s:prep} and \ref{s:dif}.

\section{Preliminaries}\label{s:prep}

In this section we collect some definitions and known results that will be used in the proof our main result.  

\subsection{Smoothness properties of real functions}

Let $U$ be an open subset of the real line and $s\in U$. A function $f:U\to X$ to a Banach space $X$ is defined to be 
\begin{itemize}
\item {\em Lipschitz} at $s$ if there exists a constant $C$ such that\newline $\|f(s+\e)-f(s)\|\le C\cdot|\e|+o(\e)$ for a small $\e$;
\item {\em differentiable} at $s$ if there is a vector $f'(s)\in X$ such that\newline $f(s+\e)=f(s)+f'(s)\cdot\e+o(\e)$ for a small $\e$;
\item {\em twice differentiable} at $s$ if there are vectors $f'(s),f''(s)\in X$ such that\newline $f(s+\e)=f(s)+f'(s)\cdot\e+\frac12f''(s)\cdot\e^2+o(\e^2)$ for a small $\e$;
\item {\em $C^1$-smooth} if $f$ is differentiable at each point of $U$ and the function $f':U\to X$, $f':u\mapsto f'(u)$, is continuous;
\item {\em absolutely continuous} if for any $\e>0$ there exists $\delta>0$ such that for any points $x_1<y_1<x_2<y_2<\dots<x_n<y_n$ in $U$ with $\sum_{i=1}^n(y_i-x_i)<\delta$ we have $\sum_{i=1}^n\|f(y_n)-f(x_n)\|<\e$;
\item {\em locally absolutely continuous} if for any $s\in U$ there exists a neighborhood $O_s\subseteq U$ of $s$ such that the restriction $f{\restriction}_{O_x}$ is absolutely continuous.
\end{itemize}
For a function $f:U\to X$ we denote by $\dot \Omega_f$ and $\ddot\Omega_f$ the set of points $s\in U$ at which $f$ is differentiable and twice differentiable, respectively.

A subset $A\subseteq \IR$ is called
\begin{itemize}
\item {\em Lebesgue null} if its Lebesgue measure is zero;
\item {\em Lebesgue co-null} if the complement $\IR\setminus A$ is Lebesgue null.
\end{itemize}
By a classical result of Lebesgue \cite[1.2.8]{KK}, the set $\dot\Omega_f$ of differentiability points of any monotone function $f:\IR\to\IR$ is Lebesgue co-null in the real line.

\begin{lemma}\label{l:mac} If a monotone continuous function $f:\IR\to\IR$ is Lipschitz at all but countably many points, then $f$ is locally absolutely continuous.
\end{lemma}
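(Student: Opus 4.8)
The plan is to prove the formally stronger statement that $f$ is absolutely continuous on \emph{every} compact interval $[a,b]$; local absolute continuity then follows at once by taking $[a,b]=[s-1,s+1]$. We may assume $f$ is non-decreasing (otherwise replace $f$ by $-f$). On a compact interval a monotone continuous function is continuous and of bounded variation, so by the Banach--Zaretsky theorem it is absolutely continuous on $[a,b]$ as soon as it has Luzin's property (N), i.e. maps Lebesgue-null subsets of $[a,b]$ onto Lebesgue-null sets. Thus it remains to verify property (N).

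Let $C\subseteq\IR$ be the countable set of points at which $f$ fails to be Lipschitz. For every $s\notin C$ the Lipschitz condition at $s$ yields a number $m\in\IN$ and $\delta>0$ with $|f(s+\e)-f(s)|\le m\cdot|\e|$ whenever $|\e|<\delta$; consequently $\IR\setminus C=\bigcup_{m,k\in\IN}A_{m,k}$, where
$$A_{m,k}:=\{s\in\IR:\ |f(s+\e)-f(s)|\le m\cdot|\e|\ \text{for all }|\e|<1/k\}.$$
The purpose of this stratification is that, although the Lipschitz constants of $f$ at the good points need not be bounded, the restriction $f{\restriction}_{A_{m,k}}$ is $m$-Lipschitz ``at scale'' $1/k$: for $s,s'\in A_{m,k}$ with $|s-s'|<1/k$, writing $s'=s+\e$ and using $s\in A_{m,k}$ gives $|f(s')-f(s)|\le m\cdot|s'-s|$.

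Now let $E\subseteq[a,b]$ be Lebesgue-null. Then $f(E\cap C)$ is countable, hence Lebesgue-null. Fix $m,k\in\IN$ and cover $[a,b]$ by finitely many intervals $I_1,\dots,I_N$ each of length $<1/k$. On each set $A_{m,k}\cap I_j$ the function $f$ is genuinely $m$-Lipschitz, and an $m$-Lipschitz map multiplies Lebesgue outer measure by at most $m$ (cover by short intervals and estimate the diameters of the images); hence $f(E\cap A_{m,k}\cap I_j)$ is Lebesgue-null. Taking the union over $j$ and then over $m,k$, we get that $f(E\setminus C)$ is Lebesgue-null, and therefore so is $f(E)=f(E\cap C)\cup f(E\setminus C)$. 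This establishes property (N), and with it the lemma.

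The only genuine subtlety — and the reason one cannot simply run a single compactness argument on $[a,b]$ — is that the pointwise Lipschitz constants of $f$ may blow up along a sequence of good points; the decomposition $\bigcup_{m,k}A_{m,k}$ circumvents this, since property (N) is stable under countable unions and needs no uniform constant. Everything else used (bounded variation of a monotone function on a compact interval, the fact that Lipschitz maps preserve Lebesgue-null sets, and the Banach--Zaretsky characterization of absolute continuity) is classical.
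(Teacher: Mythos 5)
Your proof is correct and follows essentially the same route as the paper: both reduce the statement to Luzin's property (N) via the Banach--Zaretsky-type characterization of absolute continuity for monotone continuous functions, and both then decompose the set of Lipschitz points into countably many pieces on which $f$ is genuinely Lipschitz, so that each piece carries null sets to null sets. The only (cosmetic) difference is in the decomposition: the paper's pieces $X_n$ impose the global bound $|f(x)-f(y)|\le n|x-y|$ for all $y\in[a,b]$, absorbing distant pairs via the boundedness of $f$ on $[a,b]$, whereas you localize to scale $1/k$ and finish with a finite cover by short intervals.
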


\begin{proof} Let $C$ be the  set of points $s\in\IR$ at which $f$ is not Lipschitz. By Theorem 7.1.38 of \cite{KK}, the local absolute continuity of $f$ will follow as soon as we show that for every bounded Lebesgue null set $E\subseteq \IR$ the image $f(E)$ is Lebesgue null. Let $a=\inf E$ and $b=\sup E$. For every $n\in\IN$ consider the closed subset $$X_n=\bigcap_{y\in[a,b]}\{x\in[a,b]:|f(x)-f(y)|\le n\cdot |x-y|\}$$ of $[a,b]$.
 It follows that $[a,b]\setminus C=\bigcup_{n\in\IN}X_n$. For every $n\in\IN$ the restriction $f{\restriction}_{X_n}$ is a Lipschitz function with Lipschitz constant $n$. Consequently the set $f(X_n\cap E)$ is Lebesgue null. Then the image
 $$f(E)=\bigcup_{x\in E\cap C}\{x\}\cup\bigcup_{n\in\IN}f(E\cap X_n)$$is Lebesgue null being the union of countably many Lebesgue null sets.
 \end{proof}




\subsection{Special directions}

\begin{definition} A point $s\in S_X$ on the unit sphere of a Banach space $X$ is called a {\em special direction} if for any bijective isometry $f:S_X\to S_Y$ onto the unit sphere of an arbitrary Banach space $Y$ and any points $a,b\in S_X$ with $b-a=\|b-a\|\cdot s$ we have $f(b)-f(a)=\|b-a\|\cdot f(s)$.
\end{definition}

In the proof of Theorem~\ref{t:main2} we shall use the following theorem, proved in \cite{BCS}.

\begin{theorem}\label{t:key} A $2$-dimensional Banach space $X$ has the Mazur--Ulam property if its sphere contains two linearly independent special directions.
\end{theorem}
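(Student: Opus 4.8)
The plan is to show that an arbitrary surjective isometry $f\colon S_X\to S_Y$ onto the sphere of a Banach space $Y$ coincides on $S_X$ with an explicitly constructed linear map built from the two special directions. Write $s_1,s_2\in S_X$ for the given linearly independent special directions. I would begin with three reductions. Applying the special-direction property to the diameter chord joining $a=-s_i$ and $b=s_i$ (for which $b-a=2s_i=\|b-a\|\cdot s_i$) gives $f(-s_i)=-f(s_i)$. Since $f$ is a surjective isometry it is a homeomorphism, so $S_Y$ is homeomorphic to $S_X$, which is a topological circle $S^1$; compactness then rules out $\dim Y=\infty$ (the sphere of an infinite-dimensional space is non-compact), and the homeomorphism type $S^{n-1}\approx S^1$ forces $\dim Y=2$. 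Combining $f(-s_i)=-f(s_i)$ with injectivity of $f$ shows $f(s_1),f(s_2)$ are linearly independent, hence a basis of $Y$. I may therefore define the candidate extension $F\colon X\to Y$ to be the linear isomorphism with $F(s_i)=f(s_i)$, and the whole theorem reduces to the single assertion that $f(x)=F(x)$ for every $x\in S_X$: indeed this identity gives $F(S_X)=S_Y$, so the linear bijection $F$ carries $S_X$ isometrically onto $S_Y$ and is the required extension.

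Next I would pass to coordinates. Write each $x\in X$ as $x=\alpha s_1+\beta s_2$ and each image as $f(x)=p(x)f(s_1)+q(x)f(s_2)$, and let $N_X,N_Y$ be the norms induced on $\IR^2$ by the bases $(s_1,s_2)$ and $(f(s_1),f(s_2))$, so that $S_X,S_Y$ become the unit circles of $N_X,N_Y$. In these coordinates the special-direction property acquires a transparent meaning. For $s_1$: if $x,x'\in S_X$ share the same $s_2$-coordinate $\beta$ (i.e.\ lie on a common chord parallel to $s_1$), then $f(x')-f(x)=(\alpha'-\alpha)f(s_1)$, which says the two endpoints of that chord are sent to two points with equal $q$-coordinate whose $p$-coordinates differ by exactly $\alpha'-\alpha$. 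Thus $f$ carries every chord of $S_X$ parallel to $s_1$ onto a chord of $S_Y$ parallel to $f(s_1)$ of the same width, and symmetrically for $s_2$. Equivalently, the defect functions $P=p-\alpha$ and $Q=q-\beta$ take equal values at the two endpoints of every $s_1$-chord and of every $s_2$-chord, and $P=Q=0$ at the four reference points $\pm s_1,\pm s_2$.

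It then remains to prove the crux: $P\equiv Q\equiv 0$ on $S_X$. The chord relations alone are insufficient, since they connect only the two endpoints of each parallel chord (and their orbit under alternately reflecting across $s_1$- and $s_2$-chords), which need not be dense; they must be combined with the global metric constraint $N_Y\big(p(x)-p(\tilde x),\,q(x)-q(\tilde x)\big)=N_X\big(\alpha(x)-\alpha(\tilde x),\,\beta(x)-\beta(\tilde x)\big)$ from the isometry, together with continuity of $f$. Concretely, width preservation in the two directions shows that $f$ induces monotone reparametrizations of the heights and of the abscissae of the circle, each a homeomorphism fixing the endpoints contributed by $\pm s_1,\pm s_2$; evaluating the metric equalities above against the four reference points should then force these reparametrizations to be the identity and $N_X=N_Y$, propagating $P=Q=0$ from the reference points to the whole circle by monotonicity and continuity. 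I expect this propagation — upgrading the endpoint-wise chord invariance to a pointwise identity by feeding in the metric condition — to be the main obstacle and the technical heart of the argument. Once it is carried out, $f=F$ on $S_X$, so $F$ is a linear isometry of $X$ onto $Y$ extending $f$; as $f$ and $Y$ were arbitrary, $X$ has the Mazur--Ulam property.
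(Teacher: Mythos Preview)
The paper does not prove Theorem~\ref{t:key}; it is quoted from \cite{BCS} and used as a black box. So there is no in-paper argument to compare your proposal against. Your reductions are sound: the diameter chord gives $f(-s_i)=-f(s_i)$, the sphere-homeomorphism argument forces $\dim Y=2$, the linear candidate $F$ is the right object, and translating the special-direction hypothesis into ``the defect $g:=f-F$ is constant along every $s_1$-chord and every $s_2$-chord'' is correct and useful.

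The gap is exactly where you locate it, and your sketched remedy does not close it. Constancy of $g$ along chords means $g$ is constant on orbits of the group $\langle\theta_1,\theta_2\rangle$ generated by the two chord involutions; combined with Tingley's lemma you also get $g(-x)=-g(x)$. But these orbits can be finite: whenever $B_X$ is invariant under the coordinate reflections $(\alpha,\beta)\mapsto(\pm\alpha,\pm\beta)$ one has $\theta_1\theta_2=-\mathrm{id}$, so every orbit has at most four points and continuity from $\{\pm s_1,\pm s_2\}$ propagates nothing. Your proposed fix via monotone height/width reparametrizations $h_1,h_2$ yields only $W_X(\beta)=W_Y(h_1(\beta))$ for the chord-width functions, with no independent reason that $W_X=W_Y$ or that $h_i$ is the identity; and the four distance equations $\|f(x)-f(\pm s_i)\|=\|x\mp s_i\|$ involve the unknown norm $N_Y$ together with the unknown point $(p,q)$, so they do not pin down $(p,q)=(\alpha,\beta)$ without first knowing that $F$ is an isometry --- which is precisely what is at stake. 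In short, the outline correctly isolates the crux but does not yet contain an idea that forces $g\equiv 0$ beyond a single billiard orbit; a genuine additional argument (as in \cite{BCS}) is needed here.
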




\subsection{Natural parameterizations of spheres}

\begin{definition} Let $X$ be a $2$-dimensional Banach space. A map $\br:\IR\to S_X$ is called a {\em natural parameterization} of the sphere $S_X$ is $\br$ is $C^1$-smooth and $\|\br'(s)\|=1$ for every $s\in\IR$. 
\end{definition}

The following existence and uniqueness theorems for natural parametrizations were proved in \cite{Ban}.

\begin{theorem}\label{t:e} Every smooth $2$-dimensional Banach space $X$ has a natural parameterization $\br:\IR\to S_X$.
\end{theorem}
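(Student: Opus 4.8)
The plan is to fix a linear identification $X\cong\IR^2$, regard the unit sphere $S_X$ as a closed convex curve surrounding the origin, first produce a regular $C^1$ parameterization of this curve, and then rescale its parameter so that the derivative acquires unit norm in $\|\cdot\|$. The whole construction is just an arc-length reparameterization, but carried out with respect to the norm $\|\cdot\|$ of $X$ rather than the Euclidean norm $|\cdot|$ of the identifying copy of $\IR^2$.

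First I would show that smoothness of $X$ forces $S_X$ to be a $C^1$ curve. Parameterize $S_X$ by Euclidean arc length, obtaining a periodic $1$-Lipschitz map $\sigma:\IR\to S_X$ with $|\sigma'(t)|=1$ wherever the derivative exists; here the Euclidean period $L$ equals the Euclidean length of $S_X$. As $S_X$ is convex, the one-sided tangent directions exist everywhere and the tangent angle is a monotone function of the arc-length parameter, so it can only fail to be continuous at countably many jumps. A jump corresponds to a corner of $S_X$, i.e.\ to a point admitting more than one supporting line of $B_X$ --- but smoothness of $X$ means precisely that this supporting line is unique at every point of $S_X$. Hence there are no corners, the tangent angle is continuous, and $\sigma$ is differentiable everywhere with continuous, nowhere-vanishing derivative $\sigma'$. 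I expect this passage --- translating the functional-analytic smoothness hypothesis (uniqueness of the supporting hyperplane) into the geometric $C^1$-regularity of the boundary curve --- to be the main obstacle, and the place where the hypothesis is genuinely used.

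Having $\sigma$, I would reparameterize by $\|\cdot\|$-arc length. Since $\sigma'$ is continuous and $|\sigma'(t)|=1$, equivalence of the norms on $\IR^2$ gives a constant $c>0$ with $\|\sigma'(t)\|\ge c$ for all $t$; thus $t\mapsto\|\sigma'(t)\|$ is continuous and bounded away from $0$. Define $s(t)=\int_0^t\|\sigma'(\tau)\|\,d\tau$. Then $s$ is $C^1$ and strictly increasing with $s'=\|\sigma'\|>0$, and since $\|\sigma'\|$ is $L$-periodic we get $s(t+L)=s(t)+\ell$, where $\ell=\int_0^L\|\sigma'(\tau)\|\,d\tau>0$ is the $\|\cdot\|$-length of $S_X$. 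Consequently $s:\IR\to\IR$ is a $C^1$ diffeomorphism whose $C^1$ inverse $t(\cdot)$ satisfies $t'(s)=1/\|\sigma'(t(s))\|$.

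Finally I would set $\br=\sigma\circ t$. As a composition of $C^1$ maps, $\br:\IR\to S_X$ is $C^1$, and the chain rule yields
\[
\br'(s)=\sigma'(t(s))\,t'(s)=\frac{\sigma'(t(s))}{\|\sigma'(t(s))\|},
\]
so $\|\br'(s)\|=1$ for every $s\in\IR$, and $\br'$ is continuous because $\sigma'$ is continuous and nowhere vanishing. From $s(t+L)=s(t)+\ell$ one gets $t(s+\ell)=t(s)+L$, whence $\br$ is $\ell$-periodic and surjective onto $S_X$. Thus $\br$ is the required natural parameterization. The only delicate point beyond the convex-geometry step is verifying that $\sigma'$ is continuous and nowhere vanishing, which is exactly what the regular $C^1$ parameterization from the first step supplies.
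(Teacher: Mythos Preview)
Your argument is correct. The equivalence between smoothness of $X$ (uniqueness of the supporting line at each boundary point) and $C^1$-regularity of the boundary of a planar convex body is a classical fact of convex geometry, and your monotone-tangent-angle justification is the standard one; once $\sigma'$ is known to be continuous with $|\sigma'|\equiv 1$, the reparameterization by $\|\cdot\|$-arc length is routine and yields $\|\br'\|\equiv 1$ exactly as you wrote.

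Note, however, that the present paper does not give its own proof of this theorem: it simply quotes the result from \cite{Ban}, where both the existence (Theorem~\ref{t:e}) and uniqueness (Theorem~\ref{t:u}) of natural parameterizations are established. So there is no in-paper argument to compare yours against; what you have sketched is essentially the proof one would expect to find in that reference.
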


\begin{theorem}\label{t:u} Let $X,Y$ be two smooth $2$-dimensional Banach spaces and $\br_X:\IR\to S_X$ and $\br_Y:\IR\to S_Y$ be natural parameterizations of their unit spheres. For any isometry $f:S_X\to S_Y$ there exists an isometry $\Phi:\IR\to\IR$ such that $f\circ\br_X=\br_Y\circ\Phi$.
\end{theorem}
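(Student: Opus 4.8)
The plan is to pass from the chord metric on the spheres to their intrinsic arc-length metric, on which an isometry of $\IR$ appears naturally. First I would record the basic geometry of a natural parameterization. Since $S_X$ is compact and homeomorphic to a circle while $\br_X$ is a continuous surjection onto $S_X$ with $\|\br_X'\|\equiv 1$, the map $\br_X$ is periodic; let $L_X>0$ denote its minimal period, the total arc-length of $S_X$. For $s<t$ the metric length of the arc $\br_X{\restriction}_{[s,t]}$, namely the supremum of $\sum_i\|\br_X(u_i)-\br_X(u_{i-1})\|$ over partitions, equals $\int_s^t\|\br_X'(u)\|\,du=t-s$ because $\br_X$ is $C^1$-smooth and unit-speed. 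Consequently the \emph{arc-length metric} $\rho_X$ on $S_X$ satisfies $\rho_X(\br_X(s),\br_X(t))=\min_{k\in\IZ}|s-t+kL_X|$, so $(S_X,\rho_X)$ is isometric to a metric circle of circumference $L_X$ and $\br_X\colon\IR\to(S_X,\rho_X)$ is a local isometry, that is, an isometry on every interval of length $<L_X/2$. The same applies to $\br_Y$ and to its period $L_Y$.

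The crucial step is to show that the chord-isometry $f$ is automatically an isometry for the arc-length metrics. I would observe that $\rho_X$ is exactly the \emph{length metric} induced by the chord metric $d_X(p,q)=\|p-q\|$ on $S_X$: the shortest path joining two points of the (one-dimensional) sphere runs along the sphere, and its metric length is the arc-length computed above. Since the length of a path $\gamma$ is defined purely in terms of $d_X$, as the supremum of sums $\sum_i d_X(\gamma(u_i),\gamma(u_{i-1}))$, any surjective $d$-isometry carries paths to paths of equal length and therefore induces an isometry of the associated length metrics. Hence $f\colon(S_X,\rho_X)\to(S_Y,\rho_Y)$ is an isometry of metric circles, which forces $L_X=L_Y=:L$.

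It remains to lift $f$ to the parameter line. Set $g:=f\circ\br_X\colon\IR\to S_Y$. Combining the previous two steps, for $|s-t|<L/2$ we have $\rho_Y(g(s),g(t))=\rho_X(\br_X(s),\br_X(t))=|s-t|$, so $g$ is a continuous unit-speed curve on the circle $(S_Y,\rho_Y)$. Since $\br_Y\colon\IR\to S_Y$ is the universal covering map of this circle and $\IR$ is simply connected, $g$ lifts to a continuous map $\Phi\colon\IR\to\IR$ with $\br_Y\circ\Phi=g$ and $\br_Y(\Phi(0))=g(0)$. For $s$ near $t$ the continuity of $\Phi$ gives $\rho_Y(g(s),g(t))=\min_{k}|\Phi(s)-\Phi(t)+kL|=|\Phi(s)-\Phi(t)|$, whence $|\Phi(s)-\Phi(t)|=|s-t|$ on a neighborhood of the diagonal. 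A continuous, locally distance-preserving map of the line is affine of slope $\pm1$; by continuity the sign $\sigma\in\{-1,+1\}$ is constant, so $\Phi(s)=\sigma s+\Phi(0)$ is an isometry of $\IR$, and $f\circ\br_X=\br_Y\circ\Phi$, as required.

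I expect the main obstacle to be the second paragraph: proving that the chord-isometry $f$ respects the arc-length metric without assuming any smoothness of $f$. The resolution is to express the arc-length metric intrinsically as the length metric of the chord metric, an invariant of $(S_X,d_X)$ preserved by every surjective isometry, so that no differentiability of $f$ is needed and the $C^1$-smoothness of $\br_X$ enters only to identify the metric length with the arc-length parameter.
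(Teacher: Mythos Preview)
The paper does not actually prove this theorem; it merely quotes it from \cite{Ban}. So there is no in-paper argument to compare against, and your proposal stands on its own. It is essentially correct: passing to the intrinsic length metric on $S_X$ (which coincides with the arc-length metric because $\br_X$ is $C^1$ and unit-speed), observing that any surjective chord-isometry preserves lengths of curves and hence the induced length metric, and then lifting through the covering map $\br_Y$ is the natural and clean way to obtain $\Phi$.

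Two places deserve one more sentence of justification. First, neither surjectivity nor periodicity of $\br_X$ is part of the definition of a natural parameterization in this paper (only $C^1$-smoothness and unit speed are required). Both do follow: since $X$ is smooth, $S_X$ is a $C^1$ Jordan curve with a unique tangent line at each point, so any two $C^1$ unit-speed curves into $S_X$ differ locally by a translation and a sign; comparing $\br_X$ with a fixed arc-length parameterization of $S_X$ yields $\br_X(s)=\gamma(\sigma s+c)$ globally, hence $\br_X$ is surjective and periodic with period equal to the length of $S_X$. You should say this rather than assume it. Second, for the last step, spell out why a continuous map $\Phi:\IR\to\IR$ that is locally distance-preserving is globally of the form $s\mapsto\sigma s+\Phi(0)$: $\Phi$ is differentiable with $\Phi'\in\{-1,+1\}$ everywhere, and by continuity of $\Phi'$ (or by the clopen decomposition $\{\Phi'=1\}\cup\{\Phi'=-1\}$ of the connected line) the sign is constant.

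With those two clarifications your argument is complete; the smoothness hypothesis on $X$ and $Y$ is used exactly where it should be, namely to ensure that $S_X$, $S_Y$ are $C^1$ curves so that the arc-length/length-metric identification goes through.
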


Since each isometry $\Phi:\IR\to\IR$ is of the form $\Phi(x)=ax+b$ for some $a,b\in\IR$ with $|a|=1$, Theorems~\ref{t:e}, \ref{t:u} and \ref{t:BCS} imply the following corollary that will be used in the proof of Theorem~\ref{t:main2}.

\begin{corollary}\label{c:nat} If $\br:\IR\to S_X$ is a natural parameterization of the unit sphere of some $2$-dimensional Banach space, then for any bijective isometry $f:S_X\to S_Y$ between $S_X$ and the unit sphere of an arbitrary Banach space $Y$, the map $f\circ\br_X$ is a natural parameterization of $S_Y$.
\end{corollary}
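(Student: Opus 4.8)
The plan is to bootstrap from the three existence/uniqueness results and then reduce everything to a one-line computation. First I would observe that the hypothesis forces $X$ to be smooth: since $\br$ is $C^1$-smooth with $\|\br'(s)\|=1$ (in particular $\br'(s)\ne 0$), the sphere $S_X$ is, as the image of a regular $C^1$ curve, locally a $C^1$ submanifold of the plane, so it has a well-defined tangent line, and hence a unique supporting line, at each of its points; by definition this means that $X$ is smooth. The goal is then to manufacture a natural parameterization $\br_Y$ of $S_Y$, to relate it to $f\circ\br$ by an isometry of the line, and to verify directly that $f\circ\br$ satisfies the two defining conditions of a natural parameterization.

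The key intermediate step is to show that $Y$ is smooth as well, and here Theorem~\ref{t:BCS} enters. Suppose, towards a contradiction, that $Y$ is not smooth. Then by Theorem~\ref{t:BCS} the space $Y$ has the Mazur--Ulam property, so the bijective isometry $f^{-1}:S_Y\to S_X$ extends to a linear isometry $T:Y\to X$. But then $X$ and $Y$ are linearly isometric, and since smoothness is invariant under linear isometries, $X$ would fail to be smooth, contradicting the first paragraph. Hence $Y$ is smooth. Now Theorem~\ref{t:e} supplies a natural parameterization $\br_Y:\IR\to S_Y$, and Theorem~\ref{t:u}, applied to $\br_X=\br$, $\br_Y$ and the isometry $f$, yields an isometry $\Phi:\IR\to\IR$ with $f\circ\br=\br_Y\circ\Phi$. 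As recalled just before the statement, $\Phi$ has the form $\Phi(s)=as+b$ with $|a|=1$.

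It remains to check that $\br_Y\circ\Phi$ is a natural parameterization of $S_Y$. Being the composition of the $C^1$-smooth map $\br_Y$ with the affine map $\Phi$, the map $\br_Y\circ\Phi$ is $C^1$-smooth, with derivative $(\br_Y\circ\Phi)'(s)=a\,\br_Y'(as+b)$. Since $|a|=1$ and $\|\br_Y'\|\equiv 1$, we obtain $\|(\br_Y\circ\Phi)'(s)\|=|a|\cdot\|\br_Y'(as+b)\|=1$ for every $s\in\IR$. Thus $f\circ\br=\br_Y\circ\Phi$ is $C^1$-smooth and has unit-speed derivative, i.e. it is a natural parameterization of $S_Y$, as required.

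I expect the genuinely substantive point to be the smoothness of $Y$: everything else is either a direct appeal to Theorems~\ref{t:e} and \ref{t:u} or the elementary chain-rule computation in the last paragraph. The smoothness of $Y$ is precisely where the non-smooth case of Tingley's problem (Theorem~\ref{t:BCS}) is consumed, via the extension of $f^{-1}$ to a linear isometry; once both $X$ and $Y$ are known to be smooth, their natural parameterizations are tied together by the rigid motion $\Phi$ and the conclusion is immediate.
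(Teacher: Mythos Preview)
Your proof is correct and follows essentially the same route the paper indicates: the paper derives the corollary directly from Theorems~\ref{t:e}, \ref{t:u} and \ref{t:BCS} together with the fact that every isometry of $\IR$ has the form $s\mapsto as+b$ with $|a|=1$, and you have simply spelled out how these ingredients combine (including the point that the existence of a natural parameterization forces $X$ to be smooth, and the use of Theorem~\ref{t:BCS} via $f^{-1}$ to transfer smoothness to $Y$). The only small omission is that you do not explicitly note that $Y$ is necessarily $2$-dimensional before invoking Theorem~\ref{t:BCS}, but this is immediate from the fact that $S_Y$ is homeomorphic to the circle $S_X$, and the paper itself treats this as obvious.
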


The following lemma proved in \cite[5.4]{Ban} describes a periodicity property of  natural parameterizations.

\begin{lemma}\label{l:r} Let $X$ be a smooth $2$-dimensional Banach space, $\br:\IR\to S_X$ be a natural parameterization of its sphere, and $L=\min\{s\in[0,\infty):\br(L)=-\br(0)\}$. Then $\br(s)=-\br(s+L)=\br(s+2L)$ for every $s\in\IR$.
\end{lemma}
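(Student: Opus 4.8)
The plan is to exploit the central symmetry $x\mapsto-x$ of $X$ and to feed it into the uniqueness Theorem~\ref{t:u}. Two preliminary observations get the argument started. Since $-\mathrm{id}_X$ is a linear isometry of $X$, the map $s\mapsto-\br(s)$ is $C^1$-smooth with $\|(-\br)'(s)\|=\|\br'(s)\|=1$, so it is again a natural parameterization of $S_X$; moreover $-\mathrm{id}_X{\restriction}_{S_X}$ is a bijective isometry of $S_X$. Also, since $\|\br'(s)\|=1\neq0$ for every $s$, the map $\br$ is an immersion of $\IR$ into the $1$-dimensional manifold $S_X$, hence a local homeomorphism; as $S_X$ is the boundary of the convex body $B_X$ in the plane, it is a simple closed curve, and a standard argument---$\br$ moves at unit speed, hence never stalls, while $S_X$ has finite length---shows that $\br$ is surjective and is in fact a covering map $\IR\to S_X$. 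Thus $\br$ is periodic with a least period $P>0$, and $\br{\restriction}_{[0,P)}$ is a bijection onto $S_X$. Establishing this ``simple closed curve/covering'' picture rigorously (surjectivity of $\br$ and injectivity on $[0,P)$) is the main obstacle; everything else is formal.

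With these facts in hand, I would apply Theorem~\ref{t:u} to the bijective isometry $f:=-\mathrm{id}_X{\restriction}_{S_X}$, with $Y=X$ and $\br_X=\br_Y=\br$. It produces an isometry $\Phi:\IR\to\IR$, necessarily of the form $\Phi(s)=\e s+c$ with $\e\in\{-1,1\}$ and $c\in\IR$, such that $f\circ\br=\br\circ\Phi$, i.e.
\begin{equation*}
-\br(s)=\br(\e s+c)\qquad\text{for all }s\in\IR.
\end{equation*}
The value $\e=-1$ is impossible: the relation $-\br(s)=\br(c-s)$ evaluated at $s=c/2$ yields $-\br(c/2)=\br(c/2)$, forcing $\br(c/2)=0$, which contradicts $\br(c/2)\in S_X$. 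Hence $\e=1$ and $-\br(s)=\br(s+c)$ for all $s$.

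It remains to identify $c$ with $L$, where $L$ is the least $s>0$ with $\br(s)=-\br(0)$. Iterating $-\br(s)=\br(s+c)$ gives $\br(s+2c)=-\br(s+c)=\br(s)$, so $2c$ is a period of $\br$, whereas $c$ is not a period because $\br(c)=-\br(0)\neq\br(0)$. Combined with the minimal period $P$, this forces $P\mid 2c$ but $P\nmid c$, so $c\equiv P/2\pmod P$; by injectivity of $\br$ on $[0,P)$ the equation $\br(s)=-\br(0)$ has a single solution in $[0,P)$, namely $P/2$, whence $L=P/2$ and $c\equiv L\pmod P$. Since $P$ is a period, $\br(s+c)=\br(s+L)$, so the relation above reads $\br(s)=-\br(s+L)$ for every $s\in\IR$. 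Finally, substituting $s+L$ for $s$ gives $\br(s+L)=-\br(s+2L)$, and therefore $\br(s)=-\br(s+L)=\br(s+2L)$, as claimed.
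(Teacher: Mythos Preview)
The paper does not prove Lemma~\ref{l:r}; it is imported from \cite[5.4]{Ban} without argument, so there is no in-paper proof to compare your attempt against.

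On its own merits your argument is correct. Feeding the antipodal isometry $-\mathrm{id}_{S_X}$ into Theorem~\ref{t:u}, killing the orientation-reversing branch via the fixed point $s=c/2$, and then pinning down $c\equiv P/2\pmod P$ (hence $L=P/2$) is a clean route. Two small remarks. First, periodicity need not be arranged in advance: once you have $-\br(s)=\br(s+c)$, iterating gives $\br(s+2c)=\br(s)$, so $2|c|>0$ is already a period, and the closed-subgroup argument then supplies a least period $P$. Second, surjectivity of $\br$ plays no role in your final chain of implications; what you actually use is \emph{injectivity} of $\br$ on $[0,P)$, and that is the only substantive input you have not justified. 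It reduces to the statement that a $C^1$ immersion $\IR\to S_X$ with least period $P$ is injective on a fundamental domain---equivalently, that two natural parameterizations agreeing at a point together with their derivatives must coincide. This is the sort of uniqueness established in \cite{Ban}, so your flagged ``main obstacle'' is real but routine.
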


\subsection{Phase shift}

Let $X$ be a smooth $2$-dimensional Banach space and $\br:\IR\to S_X$ be a natural parameterization of its unit sphere. Let $\varphi:\IR\to\IR$ be the function assigning to every $s\in \IR$ the smallest real number such that $\varphi(s)>s$ and $\br'(s)=\br(\varphi(s))$. The function $\varphi$ is called the {\em phase shift} for the parameterization $\br$. Its properties are described in the following lemma taken from \cite[7.1]{Ban}.

\begin{lemma}\label{l:phi} The phase shift $\varphi$ is a continuous non-decreasing function. The Banach space $X$ is strictly convex if and only if the phase shift $\varphi$ is strictly increasing.
\end{lemma}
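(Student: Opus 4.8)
The plan is to fix a linear isomorphism identifying $X$ with $\IR^2$, write $[u,v]=u_1v_2-u_2v_1$ for the associated area form, and work in angular coordinates. Since the origin lies in the interior of $B_X$, the supporting line at each point of $S_X$ avoids the origin, so the radius $\br(s)$ and the tangent $\br'(s)$ are never parallel; thus $[\br(s),\br'(s)]\ne0$ has constant sign, and after possibly reflecting the identification we may assume $[\br(s),\br'(s)]>0$. Let $\alpha$ be a continuous lift of the Euclidean argument of $\br(s)$; from $[\br(s),\br'(s)]=|\br(s)|^2\alpha'(s)$ we get $\alpha'>0$, so $\alpha$ is a $C^1$ strictly increasing bijection of $\IR$ with $\alpha(s+2L)=\alpha(s)+2\pi$ by Lemma~\ref{l:r}. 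Let $\beta$ be a continuous lift of the argument of $\br'(s)$. The whole lemma then reduces to the identity $\varphi=\alpha^{-1}\circ\beta$ together with monotonicity properties of $\beta$.

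To establish $\varphi=\alpha^{-1}\circ\beta$, the key observation is that $[\br(s),\br'(s)]>0$ forces the angle $\beta(s)-\alpha(s)$ between the radius and the tangent to lie in $(0,\pi)$ for every $s$. Since $\br(t)=\br'(s)$ is equivalent to $\alpha(t)\in\beta(s)+2\pi\IZ$, and the solutions $t=\alpha^{-1}(\beta(s)+2\pi k)$ are spaced one period $2L$ apart, the confinement $\beta(s)-\alpha(s)\in(0,\pi)$ identifies $\alpha^{-1}(\beta(s))$ as exactly the smallest $t>s$ with $\br(t)=\br'(s)$, i.e. as $\varphi(s)$. The same confinement, with $\alpha(s+2L)=\alpha(s)+2\pi$, forces $\beta(s+2L)=\beta(s)+2\pi$. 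Consequently $\varphi=\alpha^{-1}\circ\beta$, and since $\alpha^{-1}$ is continuous and strictly increasing, continuity and (strict) monotonicity of $\varphi$ follow from the corresponding properties of $\beta$.

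The crux is the monotonicity of $\beta$, and this is where convexity of $B_X$ enters. Convexity means that $S_X$ lies in the left supporting half-plane at each point, i.e. $[\br'(s),\br(t)-\br(s)]\ge0$ for all $s,t$. Applying this to the pairs $(s_1,s_2)$ and $(s_2,s_1)$ and writing $w=\br(s_2)-\br(s_1)$ gives $[\br'(s_1),w]\ge0$ and $[w,\br'(s_2)]\ge0$, so in the counterclockwise order the chord direction $w$ is sandwiched between $\br'(s_1)$ and $\br'(s_2)$. For $s_2-s_1$ small the three directions lie in an arc shorter than $\pi$ (by continuity of $\br'$ and $w\approx\br'(s_1)(s_2-s_1)$), so the sandwich reads $\beta(s_1)\le\arg w\le\beta(s_2)$; hence $\beta$ is non-decreasing on short intervals and therefore non-decreasing on $\IR$. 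Feeding this into $\varphi=\alpha^{-1}\circ\beta$ proves that $\varphi$ is continuous and non-decreasing. I expect this step---extracting monotonicity of the tangent direction from convexity with only $C^1$ data---to be the main obstacle, since $\br''$ need not exist and the classical curvature argument is unavailable.

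Finally, for the equivalence with strict convexity I would argue that $\beta$ is constant on a nondegenerate interval $[s_1,s_2]$ if and only if $\br'$ is constant there: a vector of fixed Euclidean direction and fixed norm $\|\br'(s)\|=1$ is a fixed vector, so $\br$ is affine on $[s_1,s_2]$ and $S_X$ contains the segment $[\br(s_1),\br(s_2)]$; conversely any segment in $S_X$ is swept by such an affine arc on which $\br'$, hence $\beta$, is constant. Thus $\beta$ fails to be strictly increasing precisely when $S_X$ contains a segment, i.e. precisely when $X$ is not strictly convex. Since $\alpha^{-1}$ is strictly increasing, $\varphi=\alpha^{-1}\circ\beta$ is strictly increasing if and only if $\beta$ is, which yields the stated equivalence.
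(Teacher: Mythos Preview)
The paper does not prove Lemma~\ref{l:phi}; it is quoted from \cite[7.1]{Ban}, so there is no in-paper argument to compare against. Your proposal is therefore being assessed on its own merits, and it is correct.

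Your reduction $\varphi=\alpha^{-1}\circ\beta$, with $\alpha$ and $\beta$ continuous angular lifts of $\br$ and $\br'$, is clean and does exactly what is needed. The key steps all check out: the non-vanishing of $[\br(s),\br'(s)]$ follows from the supporting line at $\br(s)$ missing the origin; the confinement $\beta(s)-\alpha(s)\in(0,\pi)$ is forced by the sign convention and picks out $\alpha^{-1}(\beta(s))$ as the minimal $t>s$ with $\br(t)=\br'(s)$; and the monotonicity of $\beta$ is obtained from the two supporting half-plane inequalities $[\br'(s_1),w]\ge0$, $[w,\br'(s_2)]\ge0$ with $w=\br(s_2)-\br(s_1)$, which for nearby $s_1<s_2$ place all three directions in a short arc and hence give $\beta(s_1)\le\beta(s_2)$. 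This local-to-global passage is legitimate since $\beta$ is continuous. Finally, the strict-convexity equivalence is correct: constancy of $\beta$ on an interval forces $\br'(s)$ to have fixed Euclidean direction, and combined with the normalisation $\|\br'(s)\|=1$ (in the $X$-norm) this pins down $\br'$ as a fixed vector, hence $\br$ is affine there and $S_X$ contains a segment; the converse is immediate.

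Two cosmetic remarks. First, when you write ``$\arg w$'' in the sandwich $\beta(s_1)\le\arg w\le\beta(s_2)$, you mean the particular lift of the argument of $w$ lying near $\beta(s_1)$; it would be cleaner to name it, say $\gamma$, and note that for $s_2-s_1$ small all three of $\beta(s_1),\gamma,\beta(s_2)$ lie within some fixed $\epsilon<\pi$ of each other, so that $\sin(\gamma-\beta(s_1))\ge0$ and $\sin(\beta(s_2)-\gamma)\ge0$ force the desired ordering. Second, the sentence beginning ``I expect this step\dots'' is a planning remark rather than part of the proof and should be dropped in a final write-up; your argument already handles that step without curvature.
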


\begin{lemma}\label{l:difff} If the phase shift $\varphi$ is differentiable at $s\in\IR$, then $\br'$ is differentiable at $s$ and $\br$ is twice differentiable at $s$.
\end{lemma}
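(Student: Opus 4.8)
The plan is to use the identity $\br'=\br\circ\varphi$, which holds by the very definition of the phase shift: $\br'(s)=\br(\varphi(s))$ for every $s\in\IR$. Since $\br$ is a natural parameterization, it is $C^1$-smooth, and in particular $\br$ is differentiable at the point $\varphi(s)$. By hypothesis $\varphi$ is differentiable at $s$, hence continuous at $s$. Therefore the chain rule for a Banach-space-valued map composed with a differentiable real function applies and yields that $\br'=\br\circ\varphi$ is differentiable at $s$, with
$$\br''(s):=(\br')'(s)=\varphi'(s)\cdot\br'\bigl(\varphi(s)\bigr).$$
This gives the first assertion of the lemma.

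It then remains to upgrade ``$\br'$ is differentiable at $s$'' to ``$\br$ is twice differentiable at $s$'' in the quantitative sense of the definition in Section~\ref{s:prep}. Since $\br$ is $C^1$-smooth, the fundamental theorem of calculus gives $\br(s+\e)-\br(s)=\int_0^{\e}\br'(s+t)\,dt$. Writing the first-order expansion $\br'(s+t)=\br'(s)+\br''(s)\,t+\rho(t)$ with $\rho(t)=o(t)$, which is exactly the differentiability of $\br'$ at $s$, and integrating termwise, I obtain
$$\br(s+\e)=\br(s)+\br'(s)\cdot\e+\frac12\br''(s)\cdot\e^2+\int_0^{\e}\rho(t)\,dt,$$
and the remainder is $o(\e^2)$ because $\bigl\|\int_0^{\e}\rho(t)\,dt\bigr\|\le\int_0^{|\e|}\|\rho(t)\|\,dt=o(\e^2)$. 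Hence $\br$ is twice differentiable at $s$ with second derivative $\br''(s)$.

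I do not anticipate any real difficulty here: the only two points requiring a little care are the justification of the chain rule in the vector-valued setting (immediate, since $\br$ is genuinely differentiable — not merely Gateaux differentiable — at $\varphi(s)$, while $\varphi$ is differentiable, hence continuous, at $s$) and the passage from the differentiability of $\br'$ to the second-order Taylor expansion of $\br$, which is the routine integral estimate displayed above.
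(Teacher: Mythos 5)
Your proof is correct and follows essentially the same route as the paper: the first assertion is the chain rule applied to the identity $\br'=\br\circ\varphi$, and the second follows by integrating the resulting first-order expansion of $\br'$ via the fundamental theorem of calculus. The only (harmless) difference is that you reuse the already-established expansion of $\br'$ at $s$ in the second step, whereas the paper re-expands $\br(\varphi(s+t))$ under the integral sign; both yield $\br''(s)=\varphi'(s)\cdot\br'(\varphi(s))$ and the same $o(\e^2)$ remainder estimate.
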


\begin{proof} If $\varphi$ is differentiable at $s\in\IR$, then for a small real number $\e$ we have
$$
\begin{aligned}
\br'(s+\e)&=\br(\varphi(s+\e))=\br(\varphi(s)+\varphi'(s)\cdot\e+o(\e))=\\
&=\br(\varphi(s))+\br'(\varphi(s))\cdot(\varphi'(s)\cdot\e+o(\e))+o(\varphi'(s)\cdot\e+o(\e))=\\
&=\br'(s)+\br'(\varphi(s))\cdot\varphi'(s)\cdot\e+o(\e),
\end{aligned}
$$
which means that $\br'$ is differentiable at $s$.

To see that $\br$ is twice differentiable at $s$, observe that for a small real number $\e$ we have
$$
\begin{aligned}
\br(s+\e)-\br(s)&=\int_0^\e\br'(s+t)\,dt=\int_0^\e\br(\varphi(s+t))\,dt=\\
&=\int_0^\e\big(\br(\varphi(s))+\br'(\varphi(s))(\varphi(s+t)-\varphi(s))+o(\varphi(s+t)-\varphi(s))\big)\,dt=\\
&=\br(\varphi(s))\cdot \e+\br'(\varphi(s))\int_0^\e(\varphi'(s)t+o(t))dt+\int_0^\e o(\varphi'(s)t+o(t))dt=\\
&=\br'(s)\cdot \e+\tfrac12\br'(\varphi(s))\cdot\varphi'(s)\cdot\e^2+o(\e^2)
\end{aligned}
$$which means that $\br$ is twice differentiable at $s$.
\end{proof}

By a classical result of Lebesgue \cite[1.2.8]{KK}, the set $\dot\Omega_f$ of differentiability points of any monotone function $f:\IR\to\IR$ is Lebesgue co-null in the real line. This fact and Lemmas~\ref{l:phi}, \ref{l:difff} imply the following lemma.

\begin{lemma}\label{l:full} The sets $\dot\Omega_\varphi\subseteq \dot\Omega_{\br'}\cap\ddot\Omega_\br$ are Lebesgue co-null in the real line.
\end{lemma}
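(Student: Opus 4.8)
The plan is to read off the statement as a bookkeeping corollary of Lemma~\ref{l:difff} and the classical Lebesgue differentiation theorem for monotone functions quoted just above the lemma. There are really two assertions to check: the inclusion $\dot\Omega_\varphi\subseteq\dot\Omega_{\br'}\cap\ddot\Omega_\br$, and the fact that these sets are Lebesgue co-null.

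First I would dispatch the inclusion. Let $s\in\dot\Omega_\varphi$, so the phase shift $\varphi$ is differentiable at $s$. Lemma~\ref{l:difff} then says precisely that $\br'$ is differentiable at $s$ and $\br$ is twice differentiable at $s$, i.e.\ $s\in\dot\Omega_{\br'}$ and $s\in\ddot\Omega_\br$. Hence $\dot\Omega_\varphi\subseteq\dot\Omega_{\br'}\cap\ddot\Omega_\br$.

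Next I would argue that $\dot\Omega_\varphi$ is Lebesgue co-null. By Lemma~\ref{l:phi} the phase shift $\varphi:\IR\to\IR$ is continuous and non-decreasing, in particular monotone, so by the classical result of Lebesgue cited in the excerpt its set of differentiability points $\dot\Omega_\varphi$ is Lebesgue co-null. Since any set containing a Lebesgue co-null set is itself Lebesgue co-null, the inclusion of the previous step shows that $\dot\Omega_{\br'}\cap\ddot\Omega_\br$ is Lebesgue co-null as well, which is exactly the claim.

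I do not expect any genuine obstacle here; the only point worth being careful about is that Lemma~\ref{l:difff} is a pointwise statement, requiring only differentiability of $\varphi$ at the single point $s$ rather than $C^1$-smoothness, so that the (in general merely measurable, large) set $\dot\Omega_\varphi$ is the correct object to feed into it.
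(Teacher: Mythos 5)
Your proposal is correct and matches the paper's argument exactly: the paper derives the lemma from the Lebesgue differentiation theorem for the monotone function $\varphi$ (monotonicity coming from Lemma~\ref{l:phi}) together with the pointwise inclusion supplied by Lemma~\ref{l:difff}. Nothing further is needed.
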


\begin{lemma}\label{l:Las} If $\varphi$ is Lipschitz at all but countably many points, then the Banach space $X$ is absolutely smooth.
\end{lemma}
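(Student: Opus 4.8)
The plan is to use the natural parameterization $\br$ itself as the witness of absolute smoothness: it is $C^1$-smooth and satisfies $\|\br'(s)\|=1$ for all $s\in\IR$, so it remains only to check that the derivative $\br':\IR\to S_X$ is locally absolutely continuous. The key observation is that the phase shift is defined precisely so that $\br'(s)=\br(\varphi(s))$ for every $s$, i.e. $\br'=\br\circ\varphi$. Thus $\br'$ is the composition of the monotone continuous function $\varphi$ with the map $\br$, which is $1$-Lipschitz because $\|\br'(s)\|=1$ everywhere (for $a<b$ one has $\|\br(b)-\br(a)\|=\|\int_a^b\br'(t)\,dt\|\le\int_a^b\|\br'(t)\|\,dt=b-a$).

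First I would apply Lemma~\ref{l:mac} to $\varphi$. By Lemma~\ref{l:phi} the phase shift $\varphi$ is monotone and continuous, and by hypothesis it is Lipschitz at all but countably many points; hence Lemma~\ref{l:mac} yields that $\varphi$ is locally absolutely continuous.

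Next I would verify that precomposing the $1$-Lipschitz map $\br$ with the monotone locally absolutely continuous function $\varphi$ gives a locally absolutely continuous map. Fix $s_0\in\IR$ and a neighborhood $O$ of $s_0$ on which $\varphi$ is absolutely continuous. Given $\e>0$, take $\delta>0$ provided by the absolute continuity of $\varphi{\restriction}_O$ for this $\e$. Then for any points $x_1<y_1<\dots<x_n<y_n$ in $O$ with $\sum_{i=1}^n(y_i-x_i)<\delta$, the monotonicity of $\varphi$ and the $1$-Lipschitz property of $\br$ give $\sum_{i=1}^n\|\br(\varphi(y_i))-\br(\varphi(x_i))\|\le\sum_{i=1}^n|\varphi(y_i)-\varphi(x_i)|=\sum_{i=1}^n(\varphi(y_i)-\varphi(x_i))<\e$. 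Hence $\br'=\br\circ\varphi$ is absolutely continuous on $O$, so $\br'$ is locally absolutely continuous on $\IR$, and therefore $X$ is absolutely smooth.

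There is no serious obstacle here: the whole argument is the assembly of Lemmas~\ref{l:mac} and \ref{l:phi} with the elementary fact that $\br'$ factors as $\br\circ\varphi$. The only mild point of care is matching the paper's definition of absolute continuity (sums of norms of increments over increasing tuples of points), but the monotonicity of $\varphi$ makes the composition estimate immediate.
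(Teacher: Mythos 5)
Your proof is correct and follows essentially the same route as the paper: apply Lemma~\ref{l:mac} (via Lemma~\ref{l:phi}) to get local absolute continuity of $\varphi$, then observe that $\br'=\br\circ\varphi$ inherits local absolute continuity from the composition. The paper states the composition step in one line; your explicit verification via the $1$-Lipschitz bound $\|\br(b)-\br(a)\|\le b-a$ is exactly the justification it leaves implicit.
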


\begin{proof} By Lemma~\ref{l:mac}, $\varphi$ is locally absolutely continuous and then the function $\br'=\br\circ \varphi$ is locally absolutely continuous being the composition of a $C^1$-smooth function $\br$ and locally absolutely continuous non-decreasing function $\varphi$.
\end{proof}

\subsection{Tingley's Lemma}

We shall need the following lemma proved by Tingley in \cite{Tingley}.

\begin{lemma}[Tingley]\label{l:Tingley} Let $f:S_X\to S_Y$ be an isometry of the unit spheres of finite-dimensional Banach spaces $X,Y$. Then $f(-x)=-f(x)$ for every $x\in S_X$.
\end{lemma}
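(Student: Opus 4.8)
The plan is to pin down the antipode $-x$ by a property that refers only to the metric of $S_X$, so that any surjective isometry is forced to respect it. The natural such property is that antipodal points realise the diameter of the sphere: for all $y,z\in S_X$ one has $\|y-z\|\le\|y\|+\|z\|=2$, whereas $\|x-(-x)\|=\|2x\|=2$. Hence $\mathrm{diam}\,S_X=2$ and the pair $\{x,-x\}$ is \emph{diametral}. Since $f$ is a surjective isometry, it preserves distances and therefore preserves this relation: $\|f(x)-f(-x)\|=\|x-(-x)\|=2$, so $f(-x)$ is a diametral partner of $f(x)$ in $S_Y$. Thus the lemma would follow at once if $-u$ were the \emph{unique} diametral partner of each $u\in S_Y$.

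That uniqueness holds exactly in the strictly convex case: there $\|u-v\|=\|u\|+\|{-v}\|=2$ forces $u$ and $-v$ to be positive multiples of one another, whence $v=-u$. So for strictly convex $Y$ the conclusion is immediate, and the entire difficulty lies in the general situation, where the set of diametral partners $D(u)=\{v\in S_Y:\|u-v\|=2\}$ can be a nondegenerate face of $S_Y$ that contains $-u$ together with other points (as already happens for the $\ell_1$-norm on the plane). In that case the diametral relation alone is too coarse to isolate the true antipode.

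To attack the general case I would pass to the self-map $\tau=f\circ\nu\circ f^{-1}:S_Y\to S_Y$, where $\nu(x)=-x$. Then $\tau$ is again a surjective isometry, it is a continuous fixed-point-free involution (since $\nu$ is), and it satisfies $\|\tau(u)-u\|=2$ for every $u\in S_Y$, because $\|\tau(u)-u\|=\|f(-w)-f(w)\|=\|{-w}-w\|=2$ for $w=f^{-1}(u)$. The statement $f(-x)=-f(x)$ is equivalent to $\tau=\nu$, so the whole problem reduces to showing that any continuous fixed-point-free diametral involution of $S_Y$ must be the antipodal map. Here finite-dimensionality is indispensable and enters through compactness of $S_Y$: the relevant suprema are attained, the assignment $u\mapsto D(u)$ is well behaved, and one can try to characterise $-u$ inside $D(u)$ by a finer, second-order metric condition — for instance using that the origin is the Chebyshev centre of the sphere (the unique $c$ minimising $\sup_{z\in S_Y}\|c-z\|$, with value $1$) and that the midpoint of a genuinely antipodal pair is exactly this centre.

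The main obstacle is precisely this last step. When $Y$ is neither strictly convex nor smooth, the face $D(u)$ genuinely contains points other than $-u$, and separating the true antipode from the rest requires extracting extra metric information about the symmetry of $S_Y$ about the origin rather than relying on the diametral relation alone. Once $-u$ is characterised by a condition expressed solely through distances on $S_Y$, its preservation by the isometry $\tau$ gives $\tau=\nu$, and hence $f(-x)=-f(x)$ for every $x\in S_X$.
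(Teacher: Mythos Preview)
The paper does not prove this lemma---it is cited from Tingley's article \cite{Tingley} and used as a black box---so there is no in-paper argument to compare against; I assess your proposal on its own merits. What you have written is an outline, not a proof, and you acknowledge as much. The reduction is sound: the strictly convex case falls at once to the diametral-partner argument, and passing to the self-isometry $\tau=f\circ\nu\circ f^{-1}$ cleanly reformulates the goal as ``every continuous diametral involution of $S_Y$ is the antipodal map''. But you then label the decisive step---singling out $-u$ inside the set $D(u)=\{v:\|u-v\|=2\}$ by a purely metric condition---as ``the main obstacle'' and do not resolve it. The Chebyshev-centre remark does not close the gap: the midpoint $\tfrac12(u+\tau(u))$ lives in $B_Y$, not on $S_Y$, and you give no mechanism, using only distances between points of $S_Y$, to detect whether that midpoint is the origin.

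The missing ingredient is precisely what Tingley supplied. In finite dimensions one checks that $\|u-v\|=2$ holds if and only if the entire segment $[u,-v]$ lies in $S_Y$, so $D(u)$ is the negative of the maximal face of $S_Y$ through $u$; compactness is what guarantees such maximal faces exist and are closed. Tingley then combines this face description with a further metric/topological argument on the sphere to pin down $-u$ among the points of $D(u)$ in an isometry-invariant way, which forces $\tau=\nu$. Until you supply this (or an equivalent) characterisation and actually carry the argument through, the proposal remains a plan rather than a proof.
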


\section{Some smoothness properties of distances on the sphere}\label{s:dif}

In this section we assume that $X$ is a strictly convex smooth $2$-dimensional Banach space and $\br:\IR\to S_X$ is a natural parameterization of its sphere.
Let $$L=\min\{s\in[0,\infty):\br(s)=-\br(0)\}$$be the half-length of the sphere $S_X$. By Lemma~\ref{l:r}, $\br(s+L)=-\br(s)$ for all $x\in\IR$.

\begin{lemma}\label{l:first} Let $a,b,s\in\IR$ be numbers such that $0\ne\br(b)-\br(a)=\|\br(b)-\br(a)\|\cdot \br(s)$. If the function $\br$ is twice differentiable at $b$ and $s$, then the function $$\nu:\IR\to\IR,\quad\nu:\e\mapsto\|\br(b+\e)-\br(a)\|,$$is twice differentiable at zero.
\end{lemma}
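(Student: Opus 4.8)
The plan is to compute the first and second derivatives of $\nu$ at zero directly from the Taylor expansions of $\br$ at $b$ and at $s$, using the smoothness hypotheses. Write $u(\e)=\br(b+\e)-\br(a)$, so that $u(0)=\|\br(b)-\br(a)\|\cdot\br(s)=:\lambda\br(s)$ with $\lambda>0$, and $\nu(\e)=\|u(\e)\|$. Since $\br$ is twice differentiable at $b$ we have $u(\e)=\lambda\br(s)+\br'(b)\e+\tfrac12\br''(b)\e^2+o(\e^2)$. The difficulty is that the norm $\|\cdot\|$ of $X$ is not assumed to be twice differentiable — indeed $X$ is not absolutely smooth — so I cannot simply differentiate $\|u(\e)\|$ by the chain rule at an arbitrary point. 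This is the main obstacle, and the way around it is the second twice-differentiability hypothesis: $\br$ is twice differentiable at the specific point $s$ for which $u(0)$ points in the direction $\br(s)$. The idea is that near $\br(s)$ the sphere $S_X$ is a twice-differentiable curve, so the distance-to-origin function (the norm) restricted to a neighbourhood of the ray through $\br(s)$ is as smooth as we need in the relevant directions.

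First I would set up the supporting functional. Since $X$ is smooth, there is a unique norming functional $x^*\in S_{X^*}$ with $x^*(\br(s))=1$; because $X$ is smooth and $\br$ is $C^1$, one has $x^*\circ\br'(s)$-type relations, and in particular $x^*(\br'(s))=0$ (the derivative of $\|\br(t)\|\equiv 1$). The key local fact I would prove or invoke is: for $v$ near $\br(s)$, $\|v\|$ is determined by writing $v=r\,\br(\sigma)$ with $r=\|v\|>0$ and $\sigma$ near $s$, and then $\|v\|=x^*(v)\big(1+O((\sigma-s)^2)\big)$ — more precisely, since $\br(\sigma)=\br(s)+\br'(s)(\sigma-s)+\tfrac12\br''(s)(\sigma-s)^2+o((\sigma-s)^2)$ and $x^*(\br(s))=1$, $x^*(\br'(s))=0$, we get $x^*(\br(\sigma))=1+\tfrac12 x^*(\br''(s))(\sigma-s)^2+o((\sigma-s)^2)$, hence $\|v\|=x^*(v)\cdot\big(1-\tfrac12 x^*(\br''(s))(\sigma-s)^2+o((\sigma-s)^2)\big)$ where $\sigma-s$ is itself controlled by the component of $v$ in the $\br'(s)$-direction.

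Second, I would apply this to $v=u(\e)$. From $u(\e)=\lambda\br(s)+\br'(b)\e+\tfrac12\br''(b)\e^2+o(\e^2)$ one reads off $x^*(u(\e))=\lambda+x^*(\br'(b))\,\e+\tfrac12 x^*(\br''(b))\,\e^2+o(\e^2)$, which is a genuine scalar quadratic expansion in $\e$. Simultaneously, writing $u(\e)=\nu(\e)\br(\sigma(\e))$, the "transverse" component of $u(\e)$ relative to $\br(s)$ is $\br'(b)\e+O(\e^2)$, which gives $\sigma(\e)-s=c\e+O(\e^2)$ for an explicit constant $c$ (coming from expressing $\br'(b)$ in the basis $\{\br(s),\br'(s)\}$), so the correction term $-\tfrac12 x^*(\br''(s))(\sigma(\e)-s)^2$ is $-\tfrac12 x^*(\br''(s))c^2\e^2+o(\e^2)$. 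Substituting into $\nu(\e)=x^*(u(\e))\cdot\big(1-\tfrac12 x^*(\br''(s))(\sigma(\e)-s)^2+o(\e^2)\big)$ yields $\nu(\e)=\lambda+A\e+B\e^2+o(\e^2)$ with $A=x^*(\br'(b))$ and $B=\tfrac12 x^*(\br''(b))-\tfrac{\lambda}{2}x^*(\br''(s))c^2$. By definition of twice differentiability at a point, this exactly says $\nu$ is twice differentiable at zero, with $\nu'(0)=A$ and $\nu''(0)=2B$.

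I expect the delicate point to be the rigorous justification of the local expansion $\|v\|=x^*(v)\big(1+O((\sigma-s)^2)\big)$ and the estimate $\sigma(\e)-s=c\e+O(\e^2)$ — i.e. that the map $v\mapsto\sigma$ is well-defined and Lipschitz/differentiable enough near $\br(s)$ to push the $o(\e^2)$ error terms through. This uses strict convexity (so that the radial parameter $\sigma$ is uniquely determined and depends continuously on $v$) together with the twice-differentiability of $\br$ at $s$ (so that the inverse of $t\mapsto\br(t)$, in suitable coordinates, has a second-order Taylor expansion at $\br(s)$). Everything else is bookkeeping with scalar Taylor expansions.
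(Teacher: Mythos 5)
Your proposal is correct and follows essentially the same route as the paper: you write $\br(b+\e)-\br(a)=\nu(\e)\,\br(\sigma(\e))$, expand everything to second order in the basis $\{\br(s),\br'(s)\}$ (your functional $x^*$ is exactly the first coordinate functional of that basis, since $x^*(\br(s))=1$ and $x^*(\br'(s))=0$), solve for $\sigma(\e)-s$ from the transverse component, and substitute back into the radial one. The resulting formula $\nu''(0)=x^*(\br''(b))-\lambda\,x^*(\br''(s))\,c^2$ agrees with the paper's $u+\rho y^2/\|\br(b)-\br(a)\|$, so no further changes are needed.
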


\begin{proof} Let $\br'(b)=x\cdot\br(s)+y\cdot\br'(s)$, $\br''(b)=u\cdot\br(s)+v\cdot\br'(s)$ and $\br''(s)=-\rho\cdot\br(s)+\tau\cdot\br'(s)$ for some real numbers $x,y,u,v,\rho,\tau$.

Given a small $\e$, find a small $\delta$ such that
\begin{equation}\label{eq2}
\br(b+\e)-\br(a)=\|\br(b+\e)-\br(a)\|\cdot\br(s+\delta).
\end{equation}
Taking into account that $\br$ is twice differentiable at $s$ and $b$, we can write
$$\br(s+\delta)=\br(s)+\br'(s)\delta+\tfrac12\br''(s)\delta^2+o(\delta^2)=
(1-\tfrac12\rho\delta^2+o(\delta^2))\br(s)+(1+\tfrac12\tau\delta+o(\delta))\delta\br'(s)$$
and
$$\begin{aligned}
&\br(b+\e)-\br(a)=\br(b)-\br(a)+\br'(b)\e+\tfrac12\br''(b)\e^2+o(\e^2)=\\
&=(\|\br(b)-\br(a)\|+x\e+\tfrac12u\e^2+o(\e^2))\br(s)+(y\e+\tfrac12u\e^2+o(\e^2))\br'(s).
\end{aligned}
$$
Writing the equation (\ref{eq2}) in coordinates, we obtain two equations:
\begin{equation}\label{first}
\|\br(b)-\br(a)\|+x\e+\tfrac12u\e^2+o(\e^2)=\|\br(b+\e)-\br(a)\|\cdot \big(1-\tfrac12\rho\delta^2+o(\delta^2)\big)
\end{equation}
and
\begin{equation}\label{second}
(y+\tfrac12u\e+o(\e))\e=\|\br(b+\e)-\br(a)\|\cdot (1+\tfrac12\tau\delta+o(\delta))\delta.
\end{equation}
The equation (\ref{second}) implies
$$\delta=\frac{y+o(1)}{\|\br(b)-\br(a)\|}\e.$$After substitution of this $\delta$ into the equation (\ref{first}), we obtain
$$
\begin{aligned}
\nu(\e)&=\|\br(b+\e)-\br(a)\|=\frac{\|\br(b)-\br(a)\|+x\e+\tfrac12u\e^2+o(\e)^2}{1-\frac{\rho\cdot(y\e)^2}{2\|\br(b)-\br(a)\|^2}+o(\e^2)}=\\
&=\big(\|\br(b)-\br(a)\|+x\e+\tfrac12u\e^2+o(\e)^2\big)\cdot\big(1+\tfrac{\rho\cdot(y\e)^2}{2\|\br(b)-\br(a)\|^2}+o(\e^2)\big)=\\
&=\|\br(b)-\br(a)\|+x\e+\tfrac12(u+\tfrac{\rho\cdot y^2}{\|\br(b)-\br(a)\|})\e^2+o(\e^2),
\end{aligned}
$$
which means that $\nu$ is twice differentiable at zero.
\end{proof}

\begin{lemma}\label{l:noLip} Let $a,b,s\in\IR$ be numbers such that $0\ne\br(b)-\br(a)=\|\br(b)-\br(a)\|\cdot \br(s)$. If the function $\br$ is twice differentiable at $b$ and the function $$\nu:\IR\to\IR,\quad\nu:\e\mapsto\|\br(b+\e)-\br(a)\|,$$is twice differentiable at zero, then the phase shift $\varphi$ is Lipschitz at $s$.
\end{lemma}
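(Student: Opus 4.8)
The plan is to recast ``$\varphi$ is Lipschitz at $s$'' as a one-sided second-order estimate for the boundary of $B_X$ near $\br(s)$, and then to extract that estimate from the twice-differentiability of $\nu$ at $0$ by transporting it through the angular coordinate used in the proof of Lemma~\ref{l:first}.

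First I would set up the reduction. Let $\ell\in X^*$ be the supporting functional at $\br(s)$, so that $\ell(\br(s))=1=\|\ell\|$ and $\ell(\br'(s))=0$ (the tangent line of $B_X$ at $\br(s)$ is the supporting line $\{\ell=1\}$, so $\ker\ell=\IR\br'(s)$). Put $\alpha(t):=\ell(\br(s+t))$, $\lambda:=\|\br(b)-\br(a)\|>0$ and $\sigma:=\varphi(s)$, so that $\br(\sigma)=\br'(s)$. Then $\alpha$ is $C^1$, $\alpha(0)=1=\max\alpha$, $\alpha'(0)=0$, and $\alpha(t)<1$ for small $t\ne0$ by strict convexity. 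Moreover $\alpha'(t)=\ell(\br'(s+t))=\ell(\br(\varphi(s+t)))=\mu(\varphi(s+t)-\sigma)$, where $\mu(\tau):=\ell(\br(\sigma+\tau))$ is $C^1$ with $\mu(0)=\ell(\br'(s))=0$ and $\mu'(0)=\ell(\br'(\sigma))\ne0$ (if $\ell(\br'(\sigma))=0$, then $\br'(\sigma)\in\ker\ell=\IR\br(\sigma)$ and the supporting line of $B_X$ at $\br(\sigma)$ would pass through the origin, which is impossible). Since $\varphi$ is non-decreasing (Lemma~\ref{l:phi}) and $\mu$ is a $C^1$ diffeomorphism near $0$, the function $\alpha'$ is monotone near $0$; comparing with $\alpha(0)=\max\alpha$ forces $\mu'(0)<0$ (otherwise $\alpha$ would be non-decreasing immediately to the right of $0$, contradicting $\alpha(t)<1$ for small $t>0$), hence $\alpha'$ is \emph{non-increasing} near $0$ (equivalently, $\alpha$ is concave there), and $|\alpha'(t)|$ is comparable to $|\varphi(s+t)-\varphi(s)|$ as $t\to0$. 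Thus $\varphi$ is Lipschitz at $s$ if and only if $|\alpha'(t)|=O(t)$; and since $\alpha$ is concave near $0$ with $\alpha'(0)=0$, the estimate $1-\alpha(t)=\int_0^t(-\alpha'(r))\,dr\ge\tfrac{t}{2}\,(-\alpha'(t/2))$ for small $t>0$ (and its mirror for $t<0$) reduces the problem to proving the second-order bound $1-\alpha(t)=O(t^2)$.

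To obtain this bound I would use the hypotheses. Write $\br'(b)=x\br(s)+y\br'(s)$. I first check that $y\ne0$: if $y=0$, then the tangent line of $B_X$ at $\br(b)$ equals $\br(b)+\IR\br(s)$, which contains $\br(a)=\br(b)-\lambda\br(s)$; being the supporting line of $B_X$ at $\br(b)$, it meets $S_X$ only at $\br(b)$ by strict convexity, whence $\br(a)=\br(b)$, contradicting $\br(b)-\br(a)\ne0$. Now let $\delta(\e)$ be the small number, continuous in $\e$, with $\br(b+\e)-\br(a)=\nu(\e)\br(s+\delta(\e))$, as in the proof of Lemma~\ref{l:first}. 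Comparing $\br'(s)$-coordinates and using $\nu(\e)\to\lambda\ne0$ gives $\delta(\e)=\tfrac{y}{\lambda}\e+o(\e)$, with non-zero slope because $y\ne0$. Applying $\ell$ to the same identity gives $\nu(\e)\,\alpha(\delta(\e))=\ell(\br(b+\e)-\br(a))$; the right-hand side is twice differentiable at $0$ (since $\br$ is twice differentiable at $b$), and $\nu$ is twice differentiable at $0$ with $\nu(0)=\lambda\ne0$, so $\alpha(\delta(\e))$ is twice differentiable at $0$ with linear coefficient $\alpha'(0)\delta'(0)=0$; hence $1-\alpha(\delta(\e))=O(\e^2)$. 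Finally, since $\delta$ is continuous, $\delta(0)=0$ and $\delta$ has non-zero slope at $0$, every $t$ near $0$ is of the form $\delta(\e)$ for some $\e$ with $|\e|\le C|t|$, so $1-\alpha(t)=1-\alpha(\delta(\e))=O(\e^2)=O(t^2)$, as required.

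I expect the main obstacle to be conceptual rather than computational: $\br$ is assumed twice differentiable at $b$ but \emph{not} at $s$, so there is no direct access to the ``curvature'' of $S_X$ at $\br(s)$. The twice-differentiability of $\nu$ only controls $S_X$ near $\br(s)$ in the radial direction, which is just enough for the estimate $1-\alpha(t)=O(t^2)$; it is then the convexity of $S_X$ (concavity of $\alpha$ near $0$, coming from the monotonicity of $\varphi$) that converts this one-sided bound into the full Lipschitz bound for $\alpha'$, hence for $\varphi$ at $s$. A secondary point to handle with care is the transfer through $\delta$: one must know $\delta$ is continuous near $0$ with non-zero slope, so that small values of $t$ are realized as $\delta(\e)$ with $|\e|$ linearly bounded by $|t|$, and this is where $y\ne0$ is used essentially.
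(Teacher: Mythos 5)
Your proof is correct and follows essentially the same route as the paper's: your $\alpha(t)=\ell(\br(s+t))$ is exactly $1+\mu(t)$ in the paper's notation (the $\br(s)$-coordinate of $\br(s+t)$), your $\mu'(0)=\ell(\br'(\varphi(s)))$ is the paper's radial supercurvature $-P(s)$, and your concavity estimate $1-\alpha(t)\ge\tfrac{t}{2}\bigl(-\alpha'(t/2)\bigr)$ is the same monotonicity trick the paper applies to $\int_\e^{2\e}(\varphi(s+u)-\varphi(s))\,du$. The only presentational differences are that you extract the radial coordinate with the supporting functional, differentiate the identity $\alpha'(t)=\ell(\br(\varphi(s+t)))$ pointwise instead of invoking the integral formula, and solve the implicit relation for the shift at $s$ in terms of the shift at $b$ rather than the other way around.
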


\begin{proof}  Assume that the function $\br$ is twice differentiable at $b$ and the function $\nu$ is twice differentiable at zero.  Let $\br'(b)=x\br(s)+y\br'(s)$ and $\br''(b)=u\br(s)+v\br'(s)$ for some real numbers $x,y,u,v$. 
Since $0\ne\br(b)-\br(a)=\|\br(b)-\br(a)\|\cdot \br(s)$, the strict convexity of $X$ implies that $y\ne0$.

For every $\e\in\IR$ find unique real numbers $\mu(\e),\eta(\e)$ such that $$\br(s+\e)=(1+\mu(\e))\br(s)+(\e+\eta(\e))\br'(s).$$ The differentiability of the function $\br$ at $s$ implies that $\mu(\e)$ and $\eta(\e)$ are of order $o(\e)$ for small $\e$. 

Given a small $\e$, find a small $\delta$ such that
\begin{equation}\label{eq:nu}
\br(b+\delta)-\br(a)=\|\br(b+\delta)-\br(a)\|\cdot\br(s+\e)=\nu(\delta)\cdot\br(s+\e).
\end{equation}
Since the function $\br$ is twice differentiable at $b$, we can write
$$
\begin{aligned}
&\br(b+\delta)-\br(a)=\br(b)-\br(a)+\br'(b)\delta+\tfrac12\br''(b)\delta^2+o(\delta^2)=\\
&=(\|\br(b)-\br(a)\|+x\delta+\tfrac12u\delta^2+o(\delta^2))\br(s)+(y\delta+\tfrac12v\delta^2+o(\delta^2))\br'(s)
\end{aligned}
$$
Since the function $\nu$ is twice differentiable at zero, we obtain
$$\nu(\delta)=\|\br(b)-\br(a)\|+\nu'(0)\delta+\tfrac12\nu''(0)\delta^2+o(\delta^2).$$
Writing the equation (\ref{eq:nu}) in coordinates, we obtain the equations
\begin{equation}\label{eq:nu-one}
\|\br(b)-\br(a)\|+x\delta+\tfrac12u\delta^2+o(\delta^2)=(\|\br(b)-\br(a)\|+\nu'(0)\delta+\tfrac12\nu''(0)\delta^2+o(\delta^2))(1+\mu(\e))
\end{equation}
and
\begin{equation}\label{eq:nu-two}
(y\delta+\tfrac12v\delta^2+o(\delta^2))=(\|\br(b)-\br(a)\|+\nu'(0)\delta+\tfrac12\nu''(0)\delta^2+o(\delta^2))(\e+\eta(\e))
\end{equation}

The equation (\ref{eq:nu-two}) implies
$$\delta=\frac{\|\br(b)-\br(a)\|}{y}\e+o(\e).$$
After substitution of $\delta$ into the  equation (\ref{eq:nu-one}), we obtain
$$
(x-\nu'(0))\delta+\tfrac12(u-\nu''(0))\delta^2+o(\delta^2)=\mu(\e)(\|\br(b)-\br(a)\|+o(1))
$$Taking into account that $\mu(\e)=o(\e)$, we conclude that $x-\nu'(0)=0$ and hence
$$
\begin{aligned}
\mu(\e)&=\frac{(u-\nu''(0))\delta^2+o(\delta^2)}{2\|\br(b)-\br(a)\|+o(1)}=
\frac{(u-\nu''(0))\delta^2}{2\|\br(b)-\br(a)\|}+o(\delta^2)=\\
&=\frac{(u-\nu''(0))\cdot\|\br(b)-\br(a)\|}{2y^2}\e^2+o(\e^2),\end{aligned}
$$
which means that $\mu$ is twice differentiable at zero and $$\mu''(0)=\frac{(u-\nu''(0))\cdot\|\br(b)-\br(a)\|}{y^2}.$$
 
Write the vector $\br'(\varphi(s))$ in the basis $\br(s),\br'(s)$ as 
$$\br'(\varphi(s))=-P(s)\cdot\br(s)+T(s)\cdot\br'(s)$$for some real numbers $P(s),T(s)$ (called the {\em radial} and {\em tangential supercuravtures} at $s$, see \cite[\S7]{Ban}). Since the vector $\br'(\varphi(s))$ is not collinear to $\br(\varphi(s))=\br'(s)$, the radial supercurvature $P(s)$ is not equal to zero. 
 
Since the function $\br$ is $C^1$-smooth, for small $\e$ we have the equality 
$$
\begin{aligned}
&\mu(\e)\br(s)+(\e+\eta(\e)\br'(s)=\br(s+\e)-\br(s)=\int_0^\e\br'(s+u)\,du=\int_0^\e\br(\varphi(s+u))\,du=\\
&=\br(\varphi(s))\cdot\e+\int_0^\e(\br(\varphi(s+u))-\br(\varphi(s))\,du=\br'(s)\cdot\e+\int_0^\e\int_{\varphi(s)}^{\varphi(s+u)}\br'(t)\,dt\,du=\\
&=\br'(s)\cdot\e+\int_0^\e\int_{\varphi(s)}^{\varphi(s+u)}\big(\br'(\varphi(s))+o(1)\big)\,dt\,du=\\
&=\br'(s)\cdot\e+\big(\br'(\varphi(s))+o(1)\big)\int_0^\e(\varphi(s+u)-\varphi(s))\,du=\\
&=\br'(s)\cdot\e+(-P(s)\br(s)+T(s)\br'(s)+o(1))\int_0^\e(\varphi(s+u)-\varphi(s))du
\end{aligned}
$$
and hence 
$$\tfrac12\mu''(0)\e^2+o(\e^2)=\mu(\e)=(-P(s)+o(1))\int_0^\e(\varphi(s+u)-\varphi(s))du.$$
Thaking into account that the function $\varphi$ is monotone, we obtain
$$\big|\varphi(s+\e)-\varphi(s)\big|\cdot|\e|\le\Big|\int_\e^{2\e}(\varphi(s+u)-\varphi(s))du\Big|\le\Big|\int_0^{2\e}(\varphi(s+u)-\varphi(s))du\Big|=\Big|\frac{\mu''(0)+o(1)}{2\cdot P(s)}\Big|(2\e)^2,$$
which means that $\varphi$ is Lipschitz at $s$.
\end{proof}

\begin{lemma}\label{l:spec2} If the function $\varphi$ is not Lipschitz at a point $s\in\IR$, then the direction $\br(s)\in S_X$ is special.
\end{lemma}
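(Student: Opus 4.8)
The plan is to argue by contradiction, reducing the rigidity of $\br(s)$-directed chords under an arbitrary sphere isometry to the second-order information packaged in Lemmas~\ref{l:first} and~\ref{l:noLip}.

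Fix a bijective isometry $f:S_X\to S_Y$. First I would check that $Y$ is again a strictly convex smooth $2$-dimensional Banach space: $S_Y$ is homeomorphic to the compact sphere $S_X$, so $Y$ is finite-dimensional and $\dim Y=\dim X=2$; and if $Y$ were not strictly convex (resp. not smooth), then $Y$ would have the Mazur--Ulam property by Theorem~\ref{t:CS} (resp. Theorem~\ref{t:BCS}), so $f^{-1}$ would extend to a linear isometry $Y\to X$, forcing $X$ itself to fail strict convexity (resp. smoothness) --- contrary to our standing assumption. By Corollary~\ref{c:nat}, $\br_Y:=f\circ\br$ is a natural parameterization of $S_Y$; let $\varphi_Y$ be its phase shift and $L_Y$ the half-length of $S_Y$. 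Since $f$ is an isometry, $\|\br_Y(t')-\br_Y(t'')\|_Y=\|\br(t')-\br(t'')\|_X$ for all $t',t''\in\IR$; in particular, for all $a,b\in\IR$ the two functions $\e\mapsto\|\br(b+\e)-\br(a)\|_X$ and $\e\mapsto\|\br_Y(b+\e)-\br_Y(a)\|_Y$ coincide; call this common function $\nu_{a,b}$.

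Saying that $\br(s)$ is special amounts to the following. Let $I\subseteq\IR$ be the open interval (of length $L$) of parameters $a$ for which the ray $\{\br(a)+t\br(s):t>0\}$ meets $S_X$; its endpoints are the two parameters at which $\br'$ is parallel to $\br(s)$, and for $a\in I$ let $\br(\beta(a))$ be the second point of $S_X$ on that ray. Then (the case of two equal points being trivial) $\br(s)$ is special iff $\br_Y(\beta(a))-\br_Y(a)\in\IR_{\ge 0}\cdot\br_Y(s)$ for every $a\in I$. On $I$ the partner map $a\mapsto\beta(a)$ is $C^1$ with non-vanishing derivative, by the implicit function theorem (using that $X$ is smooth, so its norm is $C^1$ off the origin); hence $a\mapsto\br_Y(\beta(a))-\br_Y(a)$ is a nowhere-zero $C^1$ map on $I$, and its "$\br_Y$-direction" $\sigma(a)\in\IR$ (mod $2L_Y$), defined by $\br_Y(\beta(a))-\br_Y(a)\in\IR_{>0}\cdot\br_Y(\sigma(a))$, is a $C^1$ function on $I$ (using that $Y$ is smooth and that $\br_Y$ is a local $C^1$-diffeomorphism onto $S_Y$). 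Put $B:=\{a\in I:\br_Y(\sigma(a))\ne\br_Y(s)\}$, an open subset of $I$; we must prove $B=\varnothing$. Note $a_0:=s+L\in I$ (there $\br(a_0)=-\br(s)$, $\br(\beta(a_0))=\br(s)$, the chord being a diameter), and by Tingley's Lemma~\ref{l:Tingley}, $\br_Y(\beta(a_0))-\br_Y(a_0)=f(\br(s))-f(-\br(s))=2\br_Y(s)$, so $a_0\notin B$. Assume, for a contradiction, that $B\ne\varnothing$, and let $J=(x,y)$ be a connected component of $B$. Then $\sigma$ is non-constant on $J$: otherwise $\sigma\equiv\sigma_0$ on $J$ with $\br_Y(\sigma_0)\ne\br_Y(s)$, and if an endpoint of $J$ --- say $x$ --- lay in the interior of $I$ we would get $x\notin B$ by maximality, i.e. $\br_Y(\sigma(x))=\br_Y(s)$, while continuity forces $\sigma(x)=\sigma_0$, a contradiction; hence both endpoints of $J$ would be endpoints of $I$, i.e. $J=I$, contradicting $a_0\in I\setminus B$. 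Being $C^1$ and non-constant, $\sigma$ has $\sigma'\ne 0$ on a nonempty open $U\subseteq J$, where $\sigma$ is a local $C^1$-diffeomorphism. By Lemma~\ref{l:full} applied to $\br$ and to $\br_Y$, the sets $\ddot\Omega_\br$ and $\ddot\Omega_{\br_Y}$ are Lebesgue co-null; hence $\{a\in U:\sigma(a)\in\ddot\Omega_{\br_Y}\}$ has positive measure, while $\{a\in U:\beta(a)\notin\ddot\Omega_\br\}$ and $\{a\in U:\beta(a)\notin\ddot\Omega_{\br_Y}\}$ are null ($\beta$ being locally bi-Lipschitz on $I$), so there is $a\in U$ with $\beta(a)\in\ddot\Omega_\br\cap\ddot\Omega_{\br_Y}$ and $\sigma(a)\in\ddot\Omega_{\br_Y}$. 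Writing $b:=\beta(a)$, $\sigma:=\sigma(a)$, $r:=\|\br(b)-\br(a)\|_X>0$, we have $\br(b)-\br(a)=r\,\br(s)$ and $\br_Y(b)-\br_Y(a)=r\,\br_Y(\sigma)$, with $\br,\br_Y$ twice differentiable at $b$ and $\br_Y$ twice differentiable at $\sigma$. Applying Lemma~\ref{l:first} in $Y$ (legitimate, since $Y$ is strictly convex and smooth and $\br_Y$ is a natural parameterization) gives that $\nu_{a,b}$ is twice differentiable at $0$; then applying Lemma~\ref{l:noLip} in $X$ --- with $\br$ twice differentiable at $b$ and $\nu_{a,b}$ twice differentiable at $0$ --- we conclude that $\varphi$ is Lipschitz at $s$, contradicting the hypothesis of the lemma. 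Therefore $B=\varnothing$; as $f$ was arbitrary, $\br(s)$ is a special direction.

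The step I expect to require the most care is the genericity selection inside the open set $B$ of "bad" parameters. Its heart is the boundary argument showing that the image-direction map $\sigma$ cannot be locally constant --- equivalently, that $f$ cannot carry a whole one-parameter pencil of $\br(s)$-parallel chords of $S_X$ onto a pencil of parallel chords of $S_Y$ in a different direction --- together with the unglamorous but necessary regularity analysis of the partner map $\beta$ and of $\sigma$: their $C^1$-smoothness and the non-vanishing of their derivatives away from the tangent chords, which is precisely where these maps degenerate and so must be kept out of the argument.
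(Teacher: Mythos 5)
Your proposal is correct and follows essentially the same route as the paper: establish that $Y$ is $2$-dimensional, strictly convex and smooth, pass to the natural parameterization $\br_Y=f\circ\br$, consider the normalized direction of the image chords $f(\br(\beta(a)))-f(\br(a))$ (your $\sigma$, the paper's $\Phi$), anchor its value at the diameter via Tingley's Lemma, and rule out non-constancy by picking a generic parameter where Lemma~\ref{l:first} (applied in $Y$) and Lemma~\ref{l:noLip} (applied in $X$) contradict the non-Lipschitzness of $\varphi$ at $s$. The only differences are organizational (you argue on the open set $B$ and its components rather than assuming $\Phi$ non-constant outright), and your regularity checks for $\beta$ and $\sigma$ are, if anything, slightly more explicit than the paper's.
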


\begin{proof} Let $f:S_X\to S_Y$ be a bijective isometry of $S_X$ onto the unit sphere $S_Y$ of an arbitrary Banach space $Y$. It is clear that the Banach space $Y$ is $2$-dimensional. Theorems~\ref{t:CS} and \ref{t:BCS} imply that the Banach space $Y$ is strictly convex and smooth. By Corollary~\ref{c:nat}, the composition $\br_Y=f\circ\br:\IR\to S_Y$ is a natural parameterization of the sphere $S_Y$.

Let $\theta:S_X\to S_X$ be the map assigning to each $x\in S_X$ the unique point $\theta(x)\in S_X$ such that $\{x,\theta(x)\}=S_X\cap(x+\IR\cdot\br(s))$. The strict convexity of $X$ implies that the map $\theta$ is well-defined and continuous. Since $\theta\circ\theta$ is the identity map of $S_X$, the map $\theta$ is a homeomorphism of the sphere $S_X$. The definition of $\theta$ ensures that  $x-\theta(x)\in\{\|x-\theta(x)\|\cdot \br(s),-\|x-\theta(x)\|\cdot\br(s)\}$ for every $x\in S_X$. 

Let $$P=\{x\in S_X:0\ne x-\theta(x)=\|x-\theta(x)\|\cdot \br(s)\}.$$ Then $S_X=P\cup\{x\in S_X:\theta(x)=x\}\cup(-P)$. Observe that $\theta(\br(s))=-\br(s)$ and hence $\br(s)\in P$.

Find a real number $a\in \IR$ such that $\{\br(a),\br(a+L)\}=\{x\in S_X:\theta(x)=x\}$, $\br\big((a,a+L)\big)=P$,  and $a<s<a+L$. Then $\br{\restriction}_{(a-L,a)}$ is a homeomorphism of the open interval $(a-L,a)$ onto the open half-sphere $-P$. 

The $C^1$-smoothness of the function $\br$ implies the $C^1$-smoothness of the function 
$$\Theta=(\br{\restriction}_{(a-L,a)})^{-1}\circ \theta\circ \br{\restriction}_{(a,a+L)}:(a,a+L)\to(a,a-L).$$ 
Since $\theta(\br(s))=-\br(s)=\br(s-L)$, we have $\Theta(s)=s-L$.

Now consider the continuous map
$$\phi:(a,a+L)\to Y,\;\phi:t\mapsto {f\circ \br(t)-f\circ \theta\circ \br(t)}.$$

\begin{claim} The map $\phi$ is $C^1$-smooth.
\end{claim}

\begin{proof} The function $\br_Y=f\circ \br$ is $C^1$-smooth, being a natural parameterization of the sphere $S_Y$ of the smooth Banach space $Y$.

 Now take any $t\in(a,a+L)$ and observe that $\br(t)\in P$, $\theta(\br(t))\in -P$ and  hence $\Theta(t)=(\br{\restriction}_{(a-L,a)})^{-1}\circ \theta\circ\br(t)$ is well-defined.
Since 
$$f\circ \theta\circ \br(t)=f\circ \br\circ(\br{\restriction}_{(a-L,a)})^{-1}\circ \theta\circ\br(t)=\br_Y\circ\Theta(t),$$the function $f\circ\theta\circ\br$ is is continuously differentiable at $t$ (by the $C^1$-smoothness of the functions $\Theta$ and $\br_Y$).
\end{proof}

The smoothness of the sphere $S_Y$ implies the continuous differentiability of the norm $\|\cdot\|:Y\to\IR$ on the set $Y\setminus \{0\}$. Then the function
$$\Phi:(a,a+L)\to S_Y,\quad \Phi:t\mapsto\frac{\phi(t)}{\|\phi(t)\|},$$
is $C^1$-smooth. By Lemma~\ref{l:Tingley}, $$\phi(s)=f(\br(s))-f(\theta(\br(s)))=f(\br(s))-f(-\br(s))=f(\br(s))-(-f(\br(s)))=2\cdot f(\br(s))$$ and hence $\Phi(s)=f(\br(s))$. Assuming that the function $\Phi$ is not constant, we can find a point $c\in (a,a+L)$ such that $\Phi$ is continuously differentiable at $c$ and the derivative $\Phi'(c)$ is not zero. Then $\Phi$ is a diffeomorphism of some neighborhood $U_c\subseteq (a,a+L)$ onto its image $\Phi(U_c)\subseteq S_Y$.

By Lemma~\ref{l:full}, the sets $\ddot\Omega_\br$ and $\ddot\Omega_{\br_Y}$ are Lebesgue co-null. Since diffeomorphisms preserve Lebesgue co-null sets, the set $U_c\cap \ddot\Omega_{\br}\cap\ddot\Omega_{\br_Y}\cap\Phi^{-1}(\br_Y(\ddot\Omega_{\br_Y}))$ is Lebesgue co-null in $U_c$ and hence contains some point $b$.
Let $a=\Theta(b)$ and observe that $\br(b)-\br(a)=\br(b)-\theta(\br(b))=\|\br(b)-\br(a)\|\cdot\br(s)$. Since $\Phi(b)\in \br_Y(\ddot\Omega_{\br_Y})$, there exists a real number $y\in \ddot\Omega_{\br_Y}$ such that $\Phi(b)=\br_Y(y)$.

Since $b\in\ddot\Omega_\br$ and $\varphi$ is not Lipschitz at $s$, the function $$\nu:\IR\to\IR,\quad \nu:\e\mapsto \|\br(b+\e)-\br(a)\|$$ is not twice differentiable at zero according to Lemma~\ref{l:noLip}.

On the other hand, we have
\begin{multline*}
\br_Y(b)-\br_Y(a)=f(\br(b))-f(\br(a))=f(\br(b))-f(\theta(\br(b)))=\\
=\Phi(b)\cdot\|f(\br(b))-f(\theta(\br(b))\|=\br_Y(y)\cdot \|\br_Y(b)-\br_Y(a)\|
\end{multline*}
and for every $\e$
$$\nu(\e)=\|\br(b+\e)-\br(a)\|=\|f\circ\br(b+\e)-f\circ\br(a)\|=\|\br_Y(b+\e)-\br_Y(a)\|.$$
Since $b,y\in\ddot\Omega_{\br_Y}$, the function
$$\nu:\IR\to\IR,\quad\nu_Y:\e\mapsto\|\br_Y(b+s)-\br_Y(a)\|=\|\br(b+\e)-\br(a)\|,$$
is twice differentiable at zero by Lemma~\ref{l:first}.
 This contradiction shows that the function $\Phi:(a,a+L)\to S_Y$ is constant with $f(\br(s))\in \Phi\big((a,a+L)\big)=\{f(\br(s))\}$ and hence the direction $\br(s)\in S_X$ is special.
\end{proof}

\section{Proof of Theorem~\ref{t:main2}}\label{s:main}

Assume that a $2$-dimensional Banach space $X$ is strictly convex, smooth, and not absolutely smooth. Let $\br:\IR\to S_X$ be a natural parameterization of the unit sphere $S_X$ and $\varphi:\IR\to\IR$ is the phase shift. Let $C$ be the set of points $s\in\IR$ at which the function $\varphi$ is not Lipschitz. Since $X$ is not absolutely smooth, the set $C$ is uncountable according to Lemma~\ref{l:Las}. Then we can choose two points $a,b\in C$ such that the vectors $\br(a), \br(b)$ are linearly independent. By Lemma~\ref{l:spec2}, the directions $\br(a),\br(b)$ are special and by Theorem~\ref{t:key}, the Banach space $X$ has the Mazur--Ulam property.

\section{Acknowledgements} 

The author expresses his sincere thanks to Javier Cabello S\'anchez for his inspiring paper \cite{CS} that contained a crucial idea of special directions (appearing  explicitly in \cite{BCS}), which allowed to handle non-(absolutely)-smooth cases in the proof of Theorem~\ref{t:main}.

\end{document}